\theoremstyle{plain}
\newtheorem{thm}{Theorem}[section]
\newtheorem{prop}[thm]{Proposition}
\newtheorem{lem}[thm]{Lemma}
\newtheorem*{thms}{Theorem}
\theoremstyle{definition}
\newtheorem{dfn}[thm]{Definition}
\theoremstyle{remark}
\newtheorem{rem}[thm]{Remark}
\newcommand{\bn}{\mathbb{N}}
\newcommand{\br}{\mathbb{R}}
\newcommand{\bc}{\mathbb{C}}
\renewcommand{\bf}{\mathbb{F}}
\newcommand{\ph}{\varphi}
\newcommand{\eps}{\varepsilon}
\newcommand{\id}{\mathfrak{id}}
\newcommand{\dist}{\mathrm{d}}
\newcommand{\rank}{\mathfrak{r}}
\renewcommand{\sl}{\mathrm{SL}}
\renewcommand{\id}{\mathrm{1}}
\begin{document}

\onehalfspace

\thispagestyle{empty}
\title{An exotic group as limit of finite special linear groups}

\author{Alessandro Carderi}
\address{A.C., Institut f\"ur Geometrie, TU Dresden, 01062 Dresden, Germany}
\email{alessandro.carderi@tu-dresden.de}

\author{Andreas Thom}
\address{A.T., Institut f\"ur Geometrie, TU Dresden, 01062 Dresden, Germany}
\email{andreas.thom@tu-dresden.de}

\date{\today}

\begin{abstract}
{ We consider the Polish group obtained as the rank-completion of an inductive limit of finite special linear groups. This Polish group is topologically simple modulo its center, it is extremely amenable and has no non-trivial strongly continuous unitary representation on a Hilbert space.}
\end{abstract}

\maketitle

\tableofcontents

\section*{Introduction}

Let $\bf_q$ be a finite field with $q=p^h$ elements and let $\sl_n(q)$ be the special linear group over $\bf_q$. We denote by $\rank(k)$ the \textit{rank} of a matrix $k\in M_n(\bf_q)$. We equip the groups $\sl_n(q)$ with the (normalized) \textit{rank-distance}, $\dist(g,h):=\frac{1}{n}\rank(g-h) \in [0,1]$. Note that $\dist$ is a bi-invariant metric, which means that it is a metric such that $\dist(gh,gk)=\dist(h,k)=\dist(hg,kg)$ for every $g,h,k \in \sl_n(q)$. For every $n\in\bn$, we consider the diagonal embedding \[\ph_n:\sl_{2^{n}}(q)\rightarrow \sl_{2^{n+1}}(q),\quad \text{ defined by } \quad \ph_n(g):=\begin{pmatrix} g&0\\0&g\end{pmatrix}. \]

Observe that for every $n$, $\ph_n$ is an isometric homomorphism. We denote by $A_0(q)$ the countable group arising as the inductive limit of the family $\{(\sl_{2^n}(q),\ph_n)\}_n$ and observe that we can extend the rank-metric $\dist$ canonically to $A_0(q)$. Let $A(q)$ be the metric-completion of $A_0(q)$ with respect to $\dist$, i.e., $A(q)$ is a Polish group and the natural extension of the rank-metric is complete and bi-invariant. The purpose of this note is to study the topological group $A(q)$. 

In many ways one can think of $A(q)$ as a finite characteristic analogue of the unitary group of the hyperfinite II$_1$-factor, which arises as a certain metric inductive limit of the sequence of unitary groups $U(2) \subset U(4) \subset U(8) \subset \cdots$, analogous to the construction above. However, the group $A(q)$ reflects at the same time in an intrinsic way asymptotic properties of the sequence of finite groups $\sl_n(q)$ -- in particular of the interplay of group structure, normalized counting measure, and normalized rank-metric. In this note we want to develop this analogy and prove various results that show similarities but also differences to the II$_1$-factor case. Some of the techniques that we are using are inspired by the theory of von Neumann algebras whereas others are completely independent.

\vspace{0.2cm}

Our main result is the following theorem.

\begin{thms}
  The Polish group $A(q)$ has the following properties:
  \begin{itemize}
  \item every strongly continuous unitary representation of $A(q)$ on a Hilbert space is trivial,
  \item the group $A(q)$ is extremely amenable,
  \item the center of $A(q)$ is isomorphic to $\bf_q^{\times}$ and the quotient by its center is topologically simple,
  \item it contains every countable amenable group and, in case $q$ is odd, the free group on two generators as discrete subgroups. 
  \end{itemize}
\end{thms}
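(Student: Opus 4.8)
The plan is to prove the four assertions after some structural preparation. \emph{Preliminaries.} Write $\iota_n\colon\sl_{2^n}(q)\to A_0(q)$ for the canonical maps. I would first record that each $\iota_n$ is an isometric embedding whose image is discrete (distinct elements lie at rank-distance $\ge 2^{-n}$); that $A_0(q)=\bigcup_n\iota_n(\sl_{2^n}(q))$ is a dense \emph{locally finite} (hence torsion) group; that for every $\eps>0$ and every $k$ the subgroup of $\sl_{2^m}(q)$ consisting of matrices equal to $\id$ off a fixed $2^k\times 2^k$ block is a copy of $\sl_{2^k}(q)$ contained, for $m$ large, in the $\eps$-ball of $A(q)$, so that the $\eps$-ball generates a dense subgroup; and that for $\lambda\in\bf_q^\times$ the sequence $\iota_m\big(\mathrm{diag}(\lambda,\dots,\lambda,\lambda^{-(2^m-1)})\big)$ is Cauchy (consecutive terms differ on $O(1)$ coordinates) with a central limit $z_\lambda$ satisfying $\dist(z_\lambda,\id)=1$ for $\lambda\ne 1$; thus $\lambda\mapsto z_\lambda$ embeds $\bf_q^\times$ into $Z(A(q))$.

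\emph{Center and topological simplicity.} For the reverse inclusion I would show that if $g=\lim_m\iota_m(g_m)$ and $\rank(g_m-\lambda\id)/2^m\to 0$ for a choice of $\lambda=\lambda_m\in\bf_q$, then $g$ coincides with some $z_\lambda$; hence $Z(A(q))=\{z_\lambda\}\cong\bf_q^\times$, and a \emph{non}-central $g$ has approximants $g_m$ with $\rank(g_m-\lambda\id)\ge\delta\,2^m$ for all $\lambda\in\bf_q$, for some fixed $\delta=\delta(g)>0$ and cofinally many $m$. The key group-theoretic input is a uniform covering-number estimate: for every $\delta>0$ there is $C(\delta)$ such that whenever $h\in\sl_N(q)$ satisfies $\rank(h-\lambda\id)\ge\delta N$ for all $\lambda$, every element of $\sl_N(q)$ is a product of at most $C(\delta)$ conjugates of $h^{\pm1}$ (of Liebeck--Shalev type; I would cite this or establish it separately). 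Granting this, let $N\trianglelefteq A(q)$ be closed with $N\not\subseteq Z(A(q))$ and pick $g\in N\setminus Z(A(q))$ with approximants $g_m$ as above; for a fixed $\iota_r(g_0)=\iota_m(\mathrm{diag}(g_0,\dots,g_0))$, write the latter as a product of $\le C(\delta)$ conjugates of $g_m^{\pm1}$, replace $\iota_m(g_m)$ by $g$ throughout (the product changes by $\le C(\delta)\,\dist(\iota_m(g_m),g)\to 0$, using bi-invariance of $\dist$), and conclude $\iota_r(g_0)\in\overline{N}=N$. Hence $N\supseteq A_0(q)$, so $N=A(q)$, and $A(q)/Z(A(q))$ is topologically simple.

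\emph{Extreme amenability; absence of unitary representations.} Extreme amenability I would get from the criterion that a Polish group which is the closure of an increasing union of compact subgroups forming a Lévy family (concentration of the counting measures in the ambient metric) is extremely amenable (Gromov--Milman; see Pestov): the family $\sl_{2^n}(q)$, with normalized counting measure and normalized rank metric, is such a family, a fact I would invoke from an earlier section. The absence of nontrivial strongly continuous unitary representations is the main obstacle. The finite-dimensional case is easy: a continuous $d$-dimensional representation restricts trivially to $\sl_{2^n}(q)$ as soon as $d$ drops below the Landazuri--Seitz bound $d_{\min}(n)$ on the minimal dimension of a nontrivial representation of $\sl_{2^n}(q)$ (all constituents are then trivial and $\sl_{2^n}(q)$ is perfect), hence is trivial on the dense subgroup $A_0(q)$. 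In general one first restricts to the orthogonal complement of the invariant vectors, on which a nontrivial $\pi$ has no invariant vectors and (by the finite-dimensional case) no finite-dimensional subrepresentations, and then argues by concentration that the matrix coefficients of $\pi$ are $L^2$-negligible on each $\sl_{2^n}(q)$ — every nontrivial constituent of $\pi|_{\sl_{2^n}(q)}$ having dimension $\ge d_{\min}(n)\to\infty$ — while strong continuity at the identity, together with the fact that $A(q)$ is generated by its small-rank elements, should preclude this. Reconciling strong continuity with $L^2$-negligibility of the coefficients is the crux; it is here that the finite-characteristic feature (the rapid growth of $d_{\min}(n)$, in contrast with the unitary groups $U(2^n)$) must be used decisively.

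\emph{Discrete subgroups.} For a countable amenable $\Gamma$ I would use a Følner sequence $(F_k)$: the partial left-translation actions give permutation representations $\Gamma\to\mathrm{Sym}(F_k)$ that are asymptotically homomorphisms and asymptotically free in the normalized Hamming metric, and — exploiting the freedom in the choice of Følner sets — can be chosen coherently with respect to the natural diagonal inclusions $\sl_{2^{m_k}}(q)\hookrightarrow\sl_{2^{m_{k+1}}}(q)$; embedding $\mathrm{Sym}(F_k)$ into $\sl_{2^{m_k}}(q)$ by sign-corrected permutation matrices, for which the rank metric is comparable to the Hamming metric, these assemble to an injective homomorphism $\Gamma\to A(q)$ with discrete image (every nontrivial $\gamma$ stays at rank-distance bounded below). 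Since $A_0(q)$ is torsion, the images of infinite-order elements lie in the completion $A(q)\setminus A_0(q)$, and the coherence step — which is what places the limit inside the separable group $A(q)$ rather than merely in a metric ultraproduct — is exactly where amenability is used; this is why the non-amenable, torsion-free group $F_2$ requires a separate argument. For $F_2$ with $q$ odd I would instead construct two infinite-order elements of $A(q)$ directly, as coherent rank-metric limits of pairs in $\sl_{2^n}(q)$ that are increasingly free (so that no reduced word trivializes in the limit), the parity of $q$ entering this explicit construction.
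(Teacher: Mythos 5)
Your treatment of the center and of topological simplicity is essentially the paper's argument (Liebeck--Shalev covering numbers plus bi-invariance of the rank metric), and your appeal to the L\'evy property for extreme amenability matches Section 2. But there are two genuine gaps. The first, which you yourself flag as ``the crux,'' is the absence of unitary representations, and your proposed route through Landazuri--Seitz minimal-dimension bounds and $L^2$-negligibility of matrix coefficients does not close: large minimal dimension of the nontrivial constituents of $\pi|_{\sl_{2^n}(q)}$ gives no control on the \emph{diagonal} matrix coefficient $\psi(g)=\langle \pi(g)\xi,\xi\rangle$ at a single small-rank non-central element $g$, which is exactly what strong continuity hands you and what must be propagated to all of $\sl_{2^n}(q)$. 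The paper's mechanism is different and is the key idea you are missing: average $\psi$ over conjugation to obtain a genuine character $\chi$ of the finite group, expand $\chi$ into normalized irreducible characters, and invoke Gluck's bound $|\chi_\pi(h)|<8/q$ for non-central $h$ to conclude $|1-\chi(h)|<2\eps+16/q$ for \emph{all} $h$; a Markov-inequality and Cauchy--Schwarz argument then upgrades this from the average back to $\psi$ itself. Since $16/q$ is useless for fixed $q$, the decisive trick is Proposition \ref{prop:algclos}: the inclusion $\sl_{2^n}(q)\hookrightarrow A_0(q)$ extends isometrically to $\sl_{2^n}(q^{2^m})\hookrightarrow A_0(q)$, so one may run the lemma over $\bf_{q^{2^m}}$ and let $m\to\infty$ while the element $g$ and the function $\psi$ stay fixed. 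Without this field-extension embedding (or an equivalent way of making character values of non-central elements uniformly small), your outline cannot be completed.

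The second gap is the free subgroup for odd $q$. Your plan to build two generators as ``coherent rank-metric limits of pairs that are increasingly free'' is not an argument: increasing freeness of finite approximants does not by itself force a reduced word to stay at rank-distance bounded away from $0$ in the limit, and you do not say where the parity of $q$ enters. The paper's route is structural: embed the integral Heisenberg group $\Gamma$ via the F\o lner construction, extend to the group algebra $\bf_q\Gamma$, prove (Lemma \ref{lem:inv}) that non-zero-divisors of $\bf_q\Gamma$ become invertible in $M(\bf_q)$ because the partial translation operators have rank tending to $1$, pass to the Ore localization $Q(\bf_q\Gamma)$ (a skew-field, using Linnell and Tamari), and finally invoke Gon\c{c}alves--Mandel--Shirvani to find a free group in the unit group of this skew-field --- this last step, via quaternion-type algebras, is where odd characteristic is used. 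Some version of this algebraic machinery (or Elek's results) appears unavoidable; a bare approximation argument will not produce it.
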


Before we start recalling some of the concepts that are used in the statement of the theorem, let us make some further remarks. In a similar way, one can form an inductive limit of unital rings
$$\bf_q \subset M_2(\bf_q) \subset M_4(\bf_q) \subset \cdots \subset M_0(\bf_q),$$
where again, the normalized rank-metric allows to complete $M_0(\bf_q)$ to a complete von Neumann regular ring $M(\bf_q)$. Following von Neumann, we can view $M(\bf_q)$ as the coordinatization of a continuous geometry, see \cite{vonNeumann1960} for more details. 
The unit group of this ring is naturally isomorphic to $A(q)$. Indeed, any invertible element in $M(\bf_q)$ must be the limit of invertible elements and a rank-one perturbation takes care of the determinant. Note that the algebra $M(\bf_q)$ does not depend on the special choice of inclusions. Indeed, Halperin showed in \cite{Halperin} that (in complete analogy to the II$_1$-factor situation) any choice whatsoever yields the same algebra. We thank G\'abor Elek for pointing out Halperin's work.

\subsection*{Unitary representability}

We recall that a group is \textit{unitarily representable} if it embeds in the unitary group of a Hilbert space as a topological group and it is \textit{exotic} if every continuous unitary representation on a Hilbert space is trivial, see \cite{Banaszczyk1991}. While all second countable locally compact groups are unitarily representable -- via the regular representation -- there are several examples of Polish groups that are not unitarily representable. For example, the Banach space $\ell^p$ is unitarily representable if and only if $1\leq p\leq 2$, \cite{Megrelishvili2000}, see also \cite{Galindo2009, Gao2005}. The first example of an exotic group was found by Herer and Christensen in \cite{Herer1975} and a surprising result of Megrelishvili, \cite{Megrelishvili2001}, states that the group of orientation preserving homeomorphisms of the interval has no non-trivial unitary representation (not even a representation on a reflexive Banach space). However, most of the known examples of exotic groups are either abelian or do not have a compatible bi-invariant metric.

\subsection*{Amenability and extreme amenability}

A topological group is said to be \textit{amenable} if there exists an invariant mean on the commutative $C^*$-algebra of left-uniformly continuous complex-valued functions on the group. It is a standard fact that any topological group that admits a dense locally finite subgroup is amenable -- in particular, the group $A(q)$ is amenable for any prime-power $q$. See Appendix G of \cite{Bekka2008} for more details.

A topological group is said to be \textit{extremely amenable} if each continuous action of the group on a compact topological space admits a fixed point. See \cite{Pestov2006} for more details. Let us remark that every amenable exotic group is extremely amenable. Indeed, by amenability, any action on a compact space preserves a probability measure; since the Koopman representation of the action is by hypothesis trivial, the measure has to be a Dirac measure. The first example of an extremely amenable group was found by Herer and Christensen and they proved extreme amenability exactly by showing that the group is amenable and exotic. Nowadays, several examples of extremely amenable groups are known, for example the unitary group of a separable Hilbert space \cite{Gromov1993}, the automorphism group of a standard probability space \cite{Giordano2002}, and the full group of a hyperfinite equivalence relation \cite{Giordano2007}. However as for the previous property, most of the known examples do not have a compatible bi-invariant metric. 

\subsection*{Generalizations and Open problems}

The proof of the theorem is robust and can be generalized to other (inductive) sequences of non-abelian finite (quasi-)simple groups of increasing rank. 
Let us finish the introduction by listing a number of open problems that we find interesting and challenging.

\begin{itemize}
\item Is $A(q)$ contractible? 
\item Is $A(q)/\bf_q^{\times}$ simple?
\item Does it have a unique Polish group topology?
\item More generally, is every homomorphism from $A(q)$ to another Polish group automatically continuous?
\item Does it have (isometric) representations on a reflexive Banach space?
\item Does $A(q)/\bf_q^{\times}$ have the bounded normal generation property, see \cite{Dowerk2015b}?
\end{itemize}

The first question has a positive answer in the case of the hyperfinite II$_1$-factors by work of Popa-Takesaki \cite{Popa2014}, but we were unable to generalize the methods to our setting. A more detailed study of the more basic properties of the algebras $M(\bf_q)$ and their unit groups $A(q)$ will be subject of another study.

\section{No non-trivial unitary representations}

In this first section, we will prove that $A(q)$ has no non-trivial continuous representation on a Hilbert space, that is we will show that $A(q)$ is exotic.

\begin{dfn}
A complex continuous function $\psi$ on a Polish group $G$ is \textbf{positive definite} if $\psi(\id_G)=1$ and for every $a_1,\ldots,a_n\in\bc$ and all $g_1,\ldots,g_n\in G$, we have \[\sum_{i,j=1}^n a_i\bar{a}_j \psi(g_j^{-1}g_i)\geq 0.\]

A positive definite function $\chi$ is a \textbf{character} if it is conjugation invariant, that is for every $g,h\in G$, we have $\chi(hgh^{-1})=\chi(g)$. 
\end{dfn}

We will use some easy facts about positive definite functions which are covered in the Appendix C of \cite{Bekka2008}. For example the Cauchy-Schwarz inequality for positive functions implies that such functions have their maximal value at the identity so that any positive definite function is bounded in absolute value by $1$. We say that a positive definite function is \textit{trivial} if $\psi(g)=1$ for every $g\in G$. Every positive definite function $\psi$ also satisfies the following standard inequality, which can be found in \cite [Proposition \textbf{C.4.2}]{Bekka2008}: \begin{equation}\tag{$\star$} |\psi(g)-\psi(h)|^2\leq 2 (1-Re(\psi(g^{-1}h))).\label{eq:ine}\end{equation}

There is an important relation between positive definite functions and unitary representations: the GNS construction; any positive definite function gives rise to a unitary representation and the diagonal matrix coefficients of any unitary representation are positive definite functions. In particular, a group without any non-trivial continuous positive definite function has no non-trivial strongly continuous unitary representation on a Hilbert space, see Appendix C of \cite{Bekka2008} for more informations. 

The following useful proposition will be needed in the course of the proof.

\begin{prop}\label{prop:algclos} Let $n,m \in \mathbb N$.
The canonical inclusion $\sl_{2^n}(q)\hookrightarrow A_0(q)$ extends to an isometric homomorphism $\sl_{2^n}(q^{2^m})\hookrightarrow A_0(q)$.
\end{prop}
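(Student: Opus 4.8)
The plan is to realize the larger field $\bf_{q^{2^m}}$ inside a matrix algebra over $\bf_q$ in a way that is compatible with the diagonal embeddings defining $A_0(q)$. Recall that $\bf_{q^{2^m}}$ is a degree-$2^m$ field extension of $\bf_q$, so by choosing an $\bf_q$-basis of $\bf_{q^{2^m}}$ we obtain an injective $\bf_q$-algebra homomorphism $\iota \colon \bf_{q^{2^m}} \hookrightarrow M_{2^m}(\bf_q)$ (the regular representation of the field on itself). Tensoring with the identity gives, for each $n$, an injective $\bf_q$-algebra homomorphism $\iota_n \colon M_{2^n}(\bf_{q^{2^m}}) \hookrightarrow M_{2^n}(\bf_q) \otimes M_{2^m}(\bf_q) \cong M_{2^{n+m}}(\bf_q)$. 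The first point to check is that $\iota_n$ multiplies ranks by exactly $2^m$: a matrix over $\bf_{q^{2^m}}$ of rank $r$ is sent to a matrix over $\bf_q$ of rank $2^m r$. This follows because $\iota$ sends a rank-one idempotent (a nonzero scalar, up to the field structure) — more carefully, because $\iota_n$ is the base-change of the $\bf_q$-linear map $\bf_{q^{2^m}}^{2^n} \to \bf_q^{2^{n+m}}$ underlying the identity on the vector space viewed over the smaller field, so kernels and images have their $\bf_q$-dimensions multiplied by $2^m$ relative to the $\bf_{q^{2^m}}$-dimensions. Consequently $\iota_n$, restricted to invertible elements and composed with the determinant-correcting rank-one perturbation if needed, sends $\sl_{2^n}(q^{2^m})$ into $\sl_{2^{n+m}}(q)$ and is isometric for the normalized rank-metrics (the factor $2^m$ in the rank is cancelled by the factor $2^m$ in the matrix size).

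Next I would check compatibility with the directed system. The diagonal embedding $\ph_n \colon \sl_{2^n}(q) \to \sl_{2^{n+1}}(q)$ corresponds, under the tensor identification $M_{2^{n+1}}(\bf_q) \cong M_{2^n}(\bf_q)\otimes M_2(\bf_q)$, to $g \mapsto g \otimes \id_2$; and likewise for the system over $\bf_{q^{2^m}}$. Since $\iota_n$ is $\iota \otimes \mathrm{id}$ placed in the appropriate tensor slot, the square relating $\iota_n$, $\iota_{n+1}$, $\ph_n$ over $\bf_q$, and $\ph_n$ over $\bf_{q^{2^m}}$ commutes up to the standard permutation of tensor factors, which is an inner automorphism (conjugation by a permutation matrix of determinant $\pm 1$, harmless after the rank-one determinant fix). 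Hence the maps $\iota_n$ assemble to an isometric homomorphism from the inductive limit $A_0(q^{2^m})_{\text{partial}}$ — more precisely from $\sl_{2^n}(q^{2^m})$, compatibly in $n$ — into $A_0(q)$. Composing with the canonical inclusion $\sl_{2^n}(q^{2^m}) \hookrightarrow$ (the limit) gives the desired isometric homomorphism $\sl_{2^n}(q^{2^m}) \hookrightarrow A_0(q)$ extending the inclusion of $\sl_{2^n}(q)$.

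The main obstacle I anticipate is bookkeeping rather than conceptual: making the determinant correction uniform and compatible with the directed system. The regular representation $\iota$ of $\bf_{q^{2^m}}$ need not land in $\sl$, so one must post-compose with a rank-one perturbation to fix the determinant, and one has to ensure these perturbations do not spoil either the homomorphism property in the limit or the commuting squares. The cleanest route is probably to pass to the ring picture already present in the introduction: $\iota_n$ extends to a ring embedding $M_{2^n}(\bf_{q^{2^m}}) \hookrightarrow M(\bf_q)$, and then observe that an invertible element of the source maps to an invertible element of $M(\bf_q)$, whose unit group is $A(q)$; the rank-one perturbation fixing the determinant is absorbed harmlessly in $A(q)$ since a rank-one perturbation has normalized rank-distance zero in the limit. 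I would carry out the argument at the level of the (incomplete) limit and rings, where rank-one perturbations are genuinely negligible only after completion — so in fact the statement as phrased, landing in $A_0(q)$ rather than $A(q)$, requires a little care: one should exhibit the determinant-correcting perturbation already at a finite stage $\sl_{2^{n+m}}(q)$, which is possible since $\bf_q^\times$ acts transitively enough on determinants via elementary scaling of a single basis vector. This finite-stage fix is the one place where a short explicit computation is unavoidable.
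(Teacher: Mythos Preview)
Your core idea --- embed $\bf_{q^{2^m}}$ in $M_{2^m}(\bf_q)$ via the regular representation and tensor --- is exactly what the paper does, except that the paper factors the degree-$2^m$ extension as a tower of $m$ quadratic extensions and writes down the explicit $2\times 2$ block formula at each step. Your rank argument (the underlying $\bf_q$-linear map has its dimensions scaled by $2^m$) is correct and matches the paper's kernel correspondence.

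The gap is that you have misdiagnosed the determinant issue. There is in fact no obstruction: for $g\in\sl_{2^n}(q^{2^m})$ one has
\[
\det_{\bf_q}\bigl(\iota_n(g)\bigr)=N_{\bf_{q^{2^m}}/\bf_q}\bigl(\det_{\bf_{q^{2^m}}}g\bigr)=N_{\bf_{q^{2^m}}/\bf_q}(1)=1,
\]
where $N$ is the field norm; this is precisely what the paper's Galois-conjugate computation verifies in the quadratic case. So $\iota_n$ already lands in $\sl_{2^{n+m}}(q)$ on the nose. This matters, because your proposed remedy --- post-compose with a rank-one perturbation to fix the determinant --- would destroy the homomorphism property (it is not multiplicative), and your fallback of absorbing the perturbation in the completion only produces a map into $A(q)$, not into $A_0(q)$ as the proposition requires. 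Once you observe the norm identity, the entire second half of your proposal (perturbations, passing to the ring picture, finite-stage fixes) becomes unnecessary.

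A minor point: the commuting-square discussion is also superfluous. The proposition asks for a single map for fixed $n,m$ extending the canonical inclusion of $\sl_{2^n}(q)$, and this is immediate from your construction since $g\in M_{2^n}(\bf_q)\subset M_{2^n}(\bf_{q^{2^m}})$ is sent to $g\otimes \iota(1)=g\otimes \id_{2^m}$, which is exactly $\ph_{n+m-1}\circ\cdots\circ\ph_n(g)$.
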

\begin{proof}
  For every $k$, the field $\bf_{q^{2^{k+1}}}$ is an algebraic extension of degree $2$ of $\bf_{q^{2^k}}$, hence there exists $\sigma_k\in \bf_{q^{2^{k+1}}}$ such that $\{1,\sigma_k\}$ is a $\bf_{q^{2^{k}}}$-base of $\bf_{q^{2^{k+1}}}$ and there are $\alpha_k,\beta_k\in \bf_{q^{2^k}}$ such that $\sigma_k^2=\alpha_k\sigma_k+\beta_k$. 
For every $k$, and for every $g\in \sl_{2^n}(q^{2^{k+1}})$ there are are  $g_0,g_1\in M_{2^n}(\bf_{q^{2^{k}}})$ such that $g=g_0+\sigma_k g_1$. We define \[I_{n,k}:M_{2^n}\left(q^{2^{k+1}}\right)\rightarrow M_{2^{n+1}}\left(q^{2^{k}}\right),\quad \text{ as } \quad I_{n,k}(g):=\begin{pmatrix} g_0 & \beta_k g_1 \\ g_1 & g_0 +\alpha_k g_1\end{pmatrix}.\]
It is a straightforward computation to show that for every $g,h\in M_{2^n}(\bf_{q^{2^{k+1}}})$, we have that $I_{n,k}(gh)=I_{n,k}(g)I_{n,k}(h)$. Moreover, if $g\in \sl_{2^n}(q)$, then $\det(I_{n,k}(g))=1$:
 \begin{eqnarray*}\det \begin{pmatrix} g_0 & \beta_k g_1 \\ g_1 & g_0 +\alpha_k g_1\end{pmatrix}&=&\det \begin{pmatrix} g_0 +\sigma_k g_1 & \sigma_k g_0 + (\alpha_k\sigma_k+\beta_k) g_1 \\ g_1 & g_0 +\alpha_k g_1\end{pmatrix} \\
&=&\det \begin{pmatrix} g_0 +\sigma_k g_1 & 0 \\ g_1 & g_0 +(\alpha_k-\sigma_k)g_1\end{pmatrix}
\end{eqnarray*}
by hypothesis $\det(g_0 +\sigma_k g_1)=1$ and $\alpha_k-\sigma_k$ is the Galois-conjugate of $\sigma_k$, hence $\det(g_0 +(\alpha_k-\sigma_k)g_1)=1$. This proves that $I_{n,k}:\sl_{2^{n}}(q^{2^{k+1}})\rightarrow \sl_{2^{n+1}}(q^{2^k})$ is a well-defined group homomorphism. The homomorphism $I_{n,k}$ is also isometric, in fact $v=v_0+\sigma_k v_1\in \ker(g)$ if and only if $(v_0,v_1)\in\ker(I_{n,k}(g))$. Composing the maps 
$$I_{m+n-1,0} \circ \cdots \circ I_{n+1,m-2} \circ I_{n,m-1} \colon \sl_{2^{n}}(q^{2^m}) \to \sl_{2^{n+m}}(q)$$
we obtain an inclusion $\sl_{2^{n}}(q^{2^m}) \hookrightarrow A_0(q)$ that extends the standard inclusion $\sl_{2^n}(q)\hookrightarrow  A_0(q)$. This finishes the proof.
\end{proof}

\begin{rem}
Using the idea in the proof of Proposition \ref{prop:algclos}, one can even show that the inclusion $\sl_{2^n}(q)\hookrightarrow A_0(q)$ extends to $\sl_{2^{n}}(q^{2^{\infty}}) := \cup_n \sl_{2^{n}}(q^{2^m})$ -- a special linear group over an infinite field. The characters of the group $\sl_{2^n}(q^{2^\infty})$ were completely classified by Kirillov \cite{Kirillov1965} for $n>1$ and by the work of Peterson and the second author \cite{Peterson2013} for $n=1$. Any non-trivial irreducible character of $\sl_{2^n}(q^\infty)$ is induced by its center, that is, $\chi(g)=\chi(h)$ for every non-central $g,h\in \sl_{2^n}(q^\infty)$. This can be used in a straightforward way to show that the group $A(q)$ has no non-trivial continuous character. Note that ${\rm Alt}(2^n) \subset \sl_{2^n}(q)$ and that also the work of Dudko and Medynets \cite{Dudko2013} can be invoked to study characters on $A(q)$.

Now, Theorem $2.22$ of \cite{Ando2012} states that any amenable Polish group which admits a complete bi-invariant metric is unitary representable if and only if its characters separate the points. Whence, our observation from above implies readily that $A(q)$ cannot be embedded into any unitary group of a Hilbert space as a topological group. Note that only recently, non-amenable polish groups with a complete bi-invariant metric were found, which are unitarily representable but fail to have sufficiently many characters, see \cite{AMTT}.
\end{rem}

In order to show that $A(q)$ does not have any unitary representation on a Hilbert space, we will show that every continuous positive definite function is trivial. For this, we need the following lemma. 

\begin{lem}
Let $q$ be a prime power and let $\psi \colon \sl_n(q) \to \mathbb C$ be a positive definite function. If there exists a non-central element $g \in \sl_n(q)$ such that $|1 - \psi(x^{-1}gx)|< \varepsilon $ for some $\varepsilon\in(0,1)$ and for all $x \in \sl_n(q)$, then
$$|1 - \psi(h)|<  9(2\varepsilon + 16/q)^{1/2},\quad \forall h \in \sl_n(q).$$
\end{lem}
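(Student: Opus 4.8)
Fix a non-central $g$ with $|1-\psi(x^{-1}gx)|<\varepsilon$ for all $x$. The strategy is to leverage the bi-invariant geometry: since $g$ is non-central, its conjugates $\{x^{-1}gx : x \in \sl_n(q)\}$ are ``spread out'' in the rank metric, and a well-chosen product of a few such conjugates should be able to reach \emph{any} prescribed $h \in \sl_n(q)$ — or at least something close to $h$ in rank distance. Once $h$ is written (approximately) as a short word $c_1 c_2 \cdots c_k$ in conjugates $c_i = x_i^{-1} g x_i$, I would estimate $|1-\psi(h)|$ by telescoping through the partial products using the standard inequality~\eqref{eq:ine}: each step $|\psi(c_1\cdots c_{j}) - \psi(c_1 \cdots c_{j-1})|$ is controlled, via~\eqref{eq:ine} applied with a left-translation, by $\sqrt{2(1-\mathrm{Re}\,\psi(c_j))} < \sqrt{2\varepsilon}$, so the triangle inequality gives $|1-\psi(c_1\cdots c_k)| \lesssim k\sqrt{2\varepsilon}$. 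The residual error $d(h, c_1\cdots c_k) \le \delta$ in rank distance, since $\psi$ is positive definite and hence (after checking continuity estimates for positive definite functions on a group with bi-invariant metric) Lipschitz-ish on small rank-balls, contributes something like $\sqrt{\delta}$; choosing $\delta = O(1/q)$ yields the $\sqrt{16/q}$ term.

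**Step 1: short product of conjugates generates almost everything.** The combinatorial heart is a statement of the form: for every non-central $g \in \sl_n(q)$ and every $h \in \sl_n(q)$, there exist a bounded number $k$ (ideally $k \le 3$ or $4$) of conjugates of $g$ whose product agrees with $h$ outside a subspace of dimension $O(n/q)$, i.e. within rank distance $O(1/q)$. This is where I expect the main difficulty. The relevant input is the theory of conjugacy classes in $\sl_n(q)$: a non-central element has a conjugacy class whose ``support'' (the rank of $g - \lambda \id$ for the best scalar $\lambda$) is at least $1$, and products of such classes grow quickly. A clean way to proceed is to first reduce to a transvection-like or large-rank situation: either $g$ itself is far (in rank) from all scalars, in which case one can use that $C \cdot C$ for a large-rank class $C$ covers a rank-generic set (cf. Liebeck--Shalev / Gow-type results on products of conjugacy classes), or $g$ is close to a scalar $\lambda\id$, in which case $\lambda^{-1}g$ is close to $\id$ but non-central, so it behaves like a product of a controlled number of transvections, and products of transvection-classes are classically understood (every element of $\sl_n(q)$ is a product of few transvections, with rank-controlled error). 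Since the target bound $9(2\varepsilon + 16/q)^{1/2}$ is quite lossy, I have room to be wasteful: even $k$ as large as a fixed small constant is fine, and an error subspace of dimension a fixed multiple of $n/q$ is fine.

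**Step 2: from rank-closeness to $\psi$-closeness.** Here I would record the elementary fact that if $d(a,b) = \frac1n \rank(a-b) \le \delta$ then $a^{-1}b$ differs from the identity on a subspace of dimension $\le \delta n$, hence $a^{-1}b$ lies in (a conjugate of) a copy of $\mathrm{GL}$ of small size sitting inside $\sl_n(q)$; but more directly, $a^{-1}b - \id$ has rank $\le \delta n$, and I can write $a^{-1}b$ as a product of at most $\delta n$ transvections, or simply invoke that any element at rank-distance $\delta$ from the identity is a product of a bounded number of elements each of which is a conjugate of $g$ living in a small block — no, cleaner: apply~\eqref{eq:ine} directly, $|\psi(a)-\psi(b)|^2 \le 2(1 - \mathrm{Re}\,\psi(a^{-1}b))$, and bound $1-\mathrm{Re}\,\psi(a^{-1}b)$ by decomposing $a^{-1}b$ into $O(\delta n / 1) = O(\delta n)$ conjugates of $g$ — but that blows up. The right move is instead: elements within rank distance $O(1/q)$ of the identity form a set on which we get control because any such element is itself conjugate into a product of \emph{two} conjugates of $g$ (a non-central element and its inverse-conjugate can produce any small-rank perturbation of $\id$), giving $1 - \mathrm{Re}\,\psi \lesssim \varepsilon$ there too, and the $1/q$ enters only through how well Step~1's product approximates $h$. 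I would organize the bookkeeping so that all error terms are either $O(\sqrt\varepsilon)$ (from telescoping a bounded number of~\eqref{eq:ine} steps) or $O(\sqrt{1/q})$ (from one final~\eqref{eq:ine} step across the residual rank-$O(n/q)$ gap), then absorb the absolute constants into the stated $9$ and $16$.

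**Main obstacle.** The genuine content is Step~1 — a quantitative ``products of conjugacy classes cover $\sl_n(q)$ up to small rank'' statement that is \emph{uniform in $n$} and has the error scaling like $n/q$ rather than like $n$. Standard covering results (every element is a product of two conjugacy classes of a regular semisimple element; bounded covering number for $\sl_n$) give the boundedness of $k$, but pinning the residual to rank $O(n/q)$ — so that the final error is $O(q^{-1/2})$ and not merely $o(1)$ in $n$ — will require a careful choice of which conjugates to multiply, probably building $h$ block-diagonally by handling $\approx n/\mathrm{rank}(g-\text{scalar})$ independent blocks and conceding one block's worth (dimension $O(1)$, or $O(n/q)$ if $g$ is within $1/q$ of a scalar) of slack. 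I would handle the near-scalar case and the far-from-scalar case separately, as indicated, and in each use the explicit transvection/Bruhat decomposition to keep the rank of the error term linear in $n/q$.
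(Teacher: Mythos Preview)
Your approach has a genuine gap in Step~1. The number of conjugates of $g$ needed to reach (or approximate) an arbitrary $h$ is \emph{not} bounded independently of $g$: it scales like $n/\rank(g-\lambda\id)$ for the nearest scalar $\lambda$. Concretely, take $g$ to be a transvection, so $g-\id$ has rank $1$. Using $AB-\id = A(B-\id)+(A-\id)$ inductively, any product $c_1\cdots c_k$ of $k$ conjugates of $g$ satisfies $\rank(c_1\cdots c_k-\id)\le k$, hence lies within rank distance $k/n$ of $\id$. To approximate a generic $h$ (one far from every scalar) to within any fixed rank error you therefore need $k$ of order $n$, and your telescoping via~\eqref{eq:ine} gives only $|1-\psi(h)|\lesssim n\sqrt{\varepsilon}$, which is useless. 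Your block-diagonal repair does not help: with blocks of size $r=\rank(g-\lambda\id)$ you use $\sim n/r$ conjugates, and for $r=1$ this is $\sim n$. More fundamentally, the rank metric on $\sl_n(q)$ is completely insensitive to the field size $q$, so no purely rank-geometric argument can produce the $16/q$ term; there is simply no mechanism in your framework by which $q$ enters.

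The paper's route is entirely different, and the $16/q$ reveals its origin: character theory. One averages $\psi$ over conjugation to get a genuine character $\chi(h)=|\sl_n(q)|^{-1}\sum_x\psi(x^{-1}hx)$, decomposes $\chi=\lambda\cdot 1+\sum_\pi\lambda_\pi\chi_\pi$ into irreducibles, and invokes Gluck's uniform bound $|\chi_\pi(h)|<8/q$ for every non-central $h$ and every non-trivial irreducible $\pi$. The hypothesis $|1-\chi(g)|<\varepsilon$ then forces the trivial coefficient $\lambda>1-\varepsilon-8/q$, whence $|1-\chi(h)|<2\varepsilon+16/q$ for \emph{all} $h$. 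This representation-theoretic input is exactly what handles the near-scalar case uniformly---the case that defeats your geometric approach. Passing from the averaged $\chi$ back to $\psi$ is then a short Markov/counting argument: at least $2/3$ of each conjugacy class lies in $A=\{k:|1-\psi(k)|<3(2\varepsilon+16/q)\}$, so $|A|\ge \tfrac{2}{3}|\sl_n(q)|$, hence $A\cap hA^{-1}\neq\varnothing$ for every $h$, and one application of~\eqref{eq:ine} finishes.
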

\begin{proof}
 We set
$$\chi(h) := \frac1{|\sl_n(q)|} \sum_{x \in \sl_n(q)} \psi(x^{-1}hx)$$ 
and note that $\chi \colon \sl_n(q) \to \mathbb C$ is a character.  Hence, we can write
$$\chi(h) = \lambda + \sum_{\pi} \lambda_{\pi} \chi_{\pi}(h),\quad h\in \sl_n(q), $$
where $\pi$ runs through the non-trivial irreducible representations $\pi$ of $\sl_n(q)$, $\chi_{\pi}$ denotes the normalized character of $\pi$, and $\lambda + \sum_{\pi} \lambda_{\pi} = 1$, $\lambda \geq 0$ and $\lambda_{\pi}\geq 0$ for all $\pi$. 
By a result of Gluck \cite[Theorem \textbf{3.4} and Theorem \textbf{5.3}]{Gluck1995}, for every non-central element $h \in \sl_n(q)$ and every non-trivial, normalized, and irreducible character $\chi_{\pi}$ we have $|\chi_{\pi}(h)| < 8/q$. By our assumption $|1 - \chi(g)|< \varepsilon$
and thus
$$\lambda = \chi(g) - \sum_{\pi} \lambda_{\pi} \chi_{\pi}(g) > 1- \varepsilon - 8/q.$$
We conclude that \[|1 - \chi(h)| \leq |\chi(h)-\lambda|+(1-\lambda)\leq 2(1-\lambda)<2\eps+16/q, \quad \forall h \in \sl_n(q).\]
For fixed $h$, the preceding inequality, Markov's inequality and the fact that $\varphi$ is bounded in absolute value by $1$ imply that
$$\left|\left\lbrace x \in \sl_n(q)\: \middle |\: |1 - \psi(x^{-1}hx)| \geq 3(2\varepsilon + 16/q) \right\rbrace \right| \leq \frac{|\sl_n(q)|}{3}$$
and hence that at least $2/3$ of all elements in the conjugacy class of $h$ satisfy
$$|1-\psi(k)| < 3(2\varepsilon + 16/q).$$
Since this holds for all conjugacy classes, we can set
$$A := \left\{k \in \sl_n(q) \mid |1 - \psi(k)| < 3(2\varepsilon + 16/q) \right\}$$
and conclude that $|A| \geq 2/3 \cdot |\sl_n(q)|.$ Therefore for any $h \in \sl_n(q)$, the set $A\cap hA^{-1}\not = \varnothing$ and thus there exist $k_1,k_2 \in A$ such that $h = k_1k_2$.
So the inequality \eqref{eq:ine} yields
$$|\psi(k_1)-\psi(h)|^2\leq 2 (1-Re(\psi(k_2))) \leq 6 (2\varepsilon + 16/q)$$
and hence
$$|1 - \psi(h)| \leq  |\psi(1)-\psi(k_1)| + |\psi(k_1)-\psi(h)| \leq 3(2\varepsilon + 16/q) + (6(2\varepsilon + 16/q))^{1/2} \leq 9(2\varepsilon + 16/q)^{1/2}.$$
This proves the claim.
\end{proof}

Let us now fix a positive definite function $\psi \colon A(q) \to \mathbb C$. Let $\varepsilon>0$ and choose $\delta>0$ such that $\dist(1_{A(q)},x)< \delta$ implies $|1 - \psi(x)|< \varepsilon$. For $n$ large enough, the group $\sl_{2^n}(q)$ will contain a non-central element $g$ with $\dist(1_{A(q)},g)< \delta$. By conjugation invariance of the metric, we conclude that $|1-\psi(xgx^{-1})|< \varepsilon$ for all $x \in A(q)$. Moreover, $\sl_{2^n}(q) \subset \sl_{2^n}(q^m) \subset A(q)$ as explained in Proposition \ref{prop:algclos}. So applying the previous lemma to $\sl_{2^n}(q^m)$, we get that the restriction of $\psi$ to $\sl_{2^n}(q)$ satisfies
$$|1-\psi(h)|< 9(2\varepsilon + 16/q^{m})^{1/2}, \quad \forall h \in \sl_{2^n}(q).$$
Since this holds for any $m \in \mathbb N$, we conclude that $|1 - \psi(h)| \leq 9(2\varepsilon)^{1/2}$ for all $h \in \sl_{2^n}(q)$ and $n$ large enough. Since $\varepsilon>0$ was arbitrary, we must have that the restriction of $\psi$ to any $\sl_{2^n}(q)$ is trivial, and hence by density of $A_0(q)$ in $A(q)$ we can conclude the proof.

\begin{rem}
For the proof of our main theorem, it is not necessary to apply Gluck's concrete estimates. Indeed, it follows already from Kirillov's work on characters of $\sl_n(k)$ for infinite fields $k$ \cite{Kirillov1965} that character values of non-central elements in $\sl_n(q)$ have to be uniformly small as $q$ tends to infinity. This is enough to conclude the proof. One can see the existence of small uniform bounds either by going through the techniques in Kirillov's proof (see also the proof of Theorem 2.4 in \cite{Peterson2013}) or by applying Kirillov's character rigidity to a suitable ultra-product of finite groups $\sl_n(q)$ (for $n$ fixed and $q$ variable) which can be identified with $\sl_n(k)$, where $k$ is the associated ultraproduct of finite fields, which is itself an infinite field.
\end{rem}

\section{A second route to extreme amenability}

\subsection{L\'evy groups}

As outlined in the beginning, every amenable and exotic group is extremely amenable. However, there is a different route to extreme amenability using the phenomenon of measure-concentration. In this section, we want to show that $A(q)$ is a L\'evy group and hence extremely amenable. See \cite{Pestov2006} for more background on this topic. A similar approach was recently carried out in \cite{Dowerk2015a} to give a direct proof that the unitary group of the hyperfinite II$_1$-factor is extremely amenable. As a by-product, we can give explicit bounds on the concentration function that are useful to give quantitative bounds in various non-commutative Ramsey theoretic applications.

\begin{dfn}
  A \textit{metric measure space} $(X,\dist,\mu)$ consists in a set $X$ equipped with a distance $\dist$ and measure $\mu$ which is Borel for the topology induced by the metric, see \cite{Pestov2006}. In the following we will always assume that $\mu$ is a probability measure. Given a subset $A\subset X$ we will denote by $N_r(A)$ the $r$-\textit{neighbourhood} of $A$, i.e., $N_r(A) := \{x \in X \mid \exists y \in A, \dist(x,y)<r\}$. The \textit{concentration function} of $(X,\dist,\mu)$ is defined as
  \begin{align}
    \label{eq:dfnalfa}
    \alpha_{(X,\dist,\mu)}(r):=\sup\left\lbrace 1-\mu(N_r(A))\ \mid \ A\subset X,\ \mu(A)\geq \frac{1}{2}\right\rbrace,\ r>0.
  \end{align}
\end{dfn}

\begin{dfn}
  A sequence of metric measure spaces $(X_n,\dist_n,\mu_n)$ with diameter constant equal to $1$,  is a \textit{L\'evy family} if for every $r\geq 0$,
  \[ \alpha_{(X_n,\dist_n,\mu_n)}(r)\rightarrow 0.\]
A Polish group $G$ (with compatible metrid $\dist$) is called a \textit{L\'evy group} if there exists a sequence $(G_n)_n$ of compact subgroups of $G$ equipped with their normalized Haar measure $\mu_n$, such that $(G_n,\dist|_{G_n},\mu_n)$ is a L\'evy family. If this is the case, we say that the measure concentrates along the sequence of subgroups.
\end{dfn}

The relationship with extreme amenability is given by the following theorem which can be found in \cite[Theorem \textbf{4.1.3}]{Pestov2006}.

\begin{thm}
  Every L\'evy group is extremely amenable.
\end{thm}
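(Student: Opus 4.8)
The plan is to prove the theorem via the functional-analytic characterization of extreme amenability: a topological group $G$ is extremely amenable precisely when the commutative $C^*$-algebra $\mathrm{RUC}_b(G)$ of bounded right-uniformly continuous functions on $G$ admits a multiplicative mean that is invariant under the left-translation action $(L_af)(x):=f(a^{-1}x)$, i.e.\ when the $G$-action on the greatest ambit $\mathrm{Spec}\,\mathrm{RUC}_b(G)$ has a fixed point, see \cite{Pestov2006}. (When the compatible metric $\dist$ is bi-invariant --- as it is for $A(q)$ --- the algebra $\mathrm{RUC}_b(G)$ is simply the algebra of bounded $\dist$-uniformly continuous functions; we also use that $\bigcup_n G_n$ is dense in $G$, which is part of being a L\'evy group.) So, with $(G_n,\dist|_{G_n},\mu_n)$ the given L\'evy family of compact subgroups, I would fix a free ultrafilter $\mathcal U$ on $\bn$ and put $m(f):=\lim_{n\to\mathcal U}\int_{G_n}f\,d\mu_n$. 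This is visibly a state on $\mathrm{RUC}_b(G)$, and the whole content is to check that $m$ is invariant and multiplicative.

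Invariance is the soft part. For $a\in\bigcup_n G_n$ one has $a\in G_n$ for all large $n$, so $\int_{G_n}L_af\,d\mu_n=\int_{G_n}f\,d\mu_n$ by left-invariance of the Haar measures $\mu_n$, whence $m(L_af)=m(f)$. Since $f$ is right-uniformly continuous, the map $a\mapsto L_af$ is norm-continuous from $G$ into $\mathrm{RUC}_b(G)$, so $a\mapsto m(L_af)$ is continuous and coincides with the constant $m(f)$ on the dense subgroup $\bigcup_n G_n$; hence $m$ is $G$-invariant.

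Multiplicativity is where the L\'evy property is used. The key point is that a fixed $f\in\mathrm{RUC}_b(G)$ has a modulus of continuity on $(G,\dist)$ that does \emph{not} depend on $n$. Restricting $f$ to $G_n$ and running the standard concentration argument --- take a sublevel set of $f|_{G_n}$ of $\mu_n$-measure at least $\tfrac12$, enlarge it to its $\delta$-neighbourhood with $\delta$ so small that $f$ varies by at most $t$ across any $\delta$-ball, and invoke $\alpha_{(G_n,\dist|_{G_n},\mu_n)}(\delta)\to 0$ --- shows that $f|_{G_n}$ is asymptotically constant: writing $M_n$ for its $\mu_n$-mean (or a median), one gets $\mu_n(\{\,x\in G_n:|f(x)-M_n|>t\,\})\to 0$ for every $t>0$. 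Hence, for $f,h\in\mathrm{RUC}_b(G)$, expanding $f$ and $h$ around their respective asymptotic constants and using $\|f\|_\infty,\|h\|_\infty<\infty$, the cross term is negligible and one obtains
$$\int_{G_n}fh\,d\mu_n-\Big(\int_{G_n}f\,d\mu_n\Big)\Big(\int_{G_n}h\,d\mu_n\Big)\longrightarrow 0\qquad(n\to\infty).$$
Passing to the $\mathcal U$-limit gives $m(fh)=m(f)m(h)$. Thus $m$ is a $G$-invariant character of $\mathrm{RUC}_b(G)$ --- a fixed point of the greatest ambit --- and $G$ is extremely amenable.

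The step I expect to be the real obstacle --- or rather, the place where a naive approach goes wrong --- is matching the scale of uniform continuity with the concentration estimate. It is tempting to fix a base point $x_0$ in a compact $G$-space and push the measures $\mu_n$ forward along the orbit maps $g\mapsto gx_0$, hoping the pushforwards concentrate at a single point; but those maps, though uniformly continuous on each compact $G_n$, carry no modulus of continuity that is controlled uniformly in $n$, so the bound $\alpha_{(G_n,\dots)}(\delta)\to 0$ cannot be applied at a fixed scale $\delta$. Passing instead to the bounded right-uniformly continuous functions on $G$ itself --- whose modulus of continuity is, by definition, independent of $n$ --- is precisely what makes the concentration estimate bite; everything else is formal.
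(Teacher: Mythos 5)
Your proof is correct and is essentially the standard Gromov--Milman/Pestov argument; the paper gives no proof of this statement but cites Theorem 4.1.3 of \cite{Pestov2006}, where exactly this construction (an invariant multiplicative mean on the bounded right-uniformly continuous functions, obtained as an ultralimit of the Haar averages and shown to be multiplicative via concentration around medians) is carried out. You also correctly supply the hypotheses --- that the $G_n$ form an increasing chain with dense union --- which the paper's abbreviated definition of a L\'evy group omits but which are needed for the invariance step.
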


Our second route to extreme amenability is therefore to show that $(\sl_{2^n}(q))_n$ is a Le\'vy family with respect to the normalized counting measure and the normalized rank-metric.

\begin{thm}\label{thm:extram}
  The normalized counting measure on the groups $\sl_{2^n}(q)$ concentrates with respect to the rank-metric. In particular, $A(q)$ is a L\'evy group and hence extremely amenable. 
\end{thm}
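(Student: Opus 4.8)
The plan is to prove directly that the finite groups $(\sl_N(q),\dist,\mu_N)$ -- with $\mu_N$ the normalized counting measure, $N=2^n$, and $\dist$ the normalized rank-metric of diameter $1$ -- form a L\'evy family, by establishing measure concentration for every $1$-Lipschitz function through a martingale (Azuma) estimate along a carefully chosen chain of subgroups. This mirrors the strategy carried out for the unitary group in \cite{Dowerk2015a}, with the Gram--Schmidt column-reveal replaced by its finite-field analogue. Once each $1$-Lipschitz $f$ concentrates exponentially in $N$, applying this to $f_A(x):=\dist(x,A)$ for sets $A$ with $\mu_N(A)\geq \tfrac12$ bounds the concentration function and shows $\alpha_N(r)\to 0$ for each fixed $r$; the ``in particular'' assertion then follows since $A_0(q)=\bigcup_n \sl_{2^n}(q)$ is dense in $A(q)$, each $\sl_{2^n}(q)$ is finite (hence a compact subgroup with normalized Haar measure the counting measure), and the isometric embeddings $\ph_n$ make the restricted metric the normalized rank-metric, so $A(q)$ is a L\'evy group and the theorem quoted above yields extreme amenability.

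First I would set up the martingale. Fix $N=2^n$, write $G=\sl_N(q)$, and consider the descending chain $G=H_0\supseteq H_1\supseteq\cdots\supseteq H_{N-1}$, where $H_j$ is the pointwise stabilizer of the last $j$ standard basis vectors $e_N,\ldots,e_{N-j+1}$ (equivalently, the matrices whose last $j$ columns are standard). Revealing a uniform $X\in G$ through the cosets $XH_0\supseteq XH_1\supseteq\cdots$ amounts to revealing its columns one at a time from the right. Since $H_{N-1}$ is a unipotent group of order $q^{N-1}$ (only the first column is free) rather than trivial, I would prolong the chain inside $H_{N-1}$ by at most $N$ further steps of single-entry transvection subgroups, so that the terminal subgroup is $\{\id\}$. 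For a $1$-Lipschitz $f$ put $M_j=\mathbb{E}[f(X)\mid XH_j]$, a martingale running from $\mathbb{E}f$ to $f(X)$.

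The crucial estimate is the bound on the martingale differences, and here bi-invariance of $\dist$ does the work. Choosing for each step a left transversal $T_j$ of $H_{j+1}$ in $H_j$, a direct computation with the conditional expectations gives $|M_{j+1}-M_j|\leq \mathrm{diam}_{\dist}(T_j)$, because for $h,h'\in T_j$ and $u\in H_{j+1}$ one has $\dist(x_0hu,x_0h'u)=\dist(h,h')$ by left- and right-invariance. The main obstacle is therefore to show that each transversal can be chosen to consist of bounded-rank perturbations of the identity. For a column-reveal step (say revealing the $(N-j)$-th column), a representative realizing a prescribed value $v$ of that column can be taken to be $\id$ with the $(N-j)$-th column replaced by $v$ and the free diagonal entry in position $(1,1)$ rescaled to restore $\det=1$; this differs from $\id$ by a matrix supported on two columns, hence of rank $\leq 2$, so $\dist(\id,\cdot)\leq 2/N$, while the transvection cleanup steps give $\dist(\id,\cdot)\leq 1/N$. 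Verifying that these are genuine coset representatives -- that is, that the column value determines the $H_{j+1}$-coset compatibly with the determinant constraint -- is the one routine-but-essential point that must be checked with care.

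Granting this, $\mathrm{diam}_{\dist}(T_j)\leq 4/N$ throughout, so $\sum_j \mathrm{diam}_{\dist}(T_j)^2=O(1/N)$, and Azuma's inequality gives $\mu_N\bigl(|f-\mathbb{E}f|\geq t\bigr)\leq 2\exp(-cNt^2)$ for an absolute constant $c>0$ and every $1$-Lipschitz $f$. Taking $f=f_A=\dist(\cdot,A)$ with $\mu_N(A)\geq\tfrac12$ controls $\mathbb{E}f_A$ and yields $1-\mu_N(N_r(A))=\mu_N(f_A\geq r)\leq 2\exp(-c'Nr^2)$, so $\alpha_N(r)\to 0$ for each fixed $r$ as $N=2^n\to\infty$. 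This is precisely the L\'evy family property for $(\sl_{2^n}(q))_n$, and together with the density and isometry observations of the first paragraph it shows that $A(q)$ is a L\'evy group, hence extremely amenable.
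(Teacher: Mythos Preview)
Your approach is correct and essentially the same as the paper's: both run the Azuma martingale along the chain of column-stabilizers $\sl_N(q)\supset H_1\supset\cdots$ and bound each step by a rank-$\leq 2$ perturbation, giving $\sum_i a_i^2=O(1/N)$ and hence Gaussian concentration. The paper simply invokes the Pestov--Ledoux length estimate \cite[Theorem \textbf{4.5.3}]{Pestov2006} as a black box rather than rederiving it, and for the coset-diameter bound uses a cleaner coordinate-free argument (choose a $2$-plane $V$ containing $e_n$ and $ge_n$ and an $h'\in\sl(V)$ with $h'(ge_n)=e_n$, so $(h'\oplus 1_{V'})g$ fixes $e_n$ and $\dist(g,(h'\oplus 1_{V'})g)\leq 2/N$), which sidesteps the case-analysis your explicit transversal would require when the diagonal entry $v_{N-j}$ vanishes.
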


The proof of the theorem is a straightforward application of the following theorem, whose proof can be found in \cite[Theorem \textbf{4.5.3}]{Pestov2006} or \cite[Theorem \textbf{4.4}]{Ledoux2001}

\begin{thm}
  Let $G$ be a compact metric group, metrized by a bi-invariant metric $\dist$, and let \[\{\id\}=H_0<H_1<H_2<\dots<H_n=G\] be a chain of subgroups. Denote by $a_i$ the diameter of the homogenous space $H_i/H_{i-1}$, $i=1,\dots,n$, with regard to the factor metric. Then the concentration function of the metric-measure space $(G,\dist,\mu)$, where $\mu$ is the normalized Haar measure, satisfies \[\alpha_G(r)\leq 2 exp\left(-\frac{r^2}{16\sum_{i=1}^n a_i^2}\right).\]
\end{thm}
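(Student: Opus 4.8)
The plan is to prove a sub-Gaussian concentration estimate for Lipschitz functions by the martingale (Azuma--Hoeffding) method adapted to the chain of subgroups, and then to translate it into the asserted bound on the concentration function $\alpha_G$.

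First I would reduce the statement about $\alpha_G$ to a tail bound for $1$-Lipschitz functions. Given a Borel set $A\subset G$ with $\mu(A)\geq 1/2$, the function $f(x):=\dist(x,A)$ is $1$-Lipschitz, vanishes on $A$, and satisfies $G\setminus N_r(A)=\{x\mid f(x)\geq r\}$; moreover its median is $0$ since $\mu(\{f\leq 0\})\geq\mu(A)\geq 1/2$. Hence $1-\mu(N_r(A))=\mu(\{f\geq r\})$, and it suffices to show that any $1$-Lipschitz $f$ concentrates around its mean (equivalently, its median) with variance proxy of order $\sum_i a_i^2$. The passage between mean and median, and between the one-sided and two-sided estimates, is routine and affects only the absolute constant; it is here that the conservative value $16$ in the exponent is produced.

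The core is the construction of a martingale from the chain. For each $i$ let $E_i$ be the conditional expectation given by averaging over right $H_i$-cosets, $E_if(g):=\int_{H_i}f(gh)\,d\mu_i(h)$, where $\mu_i$ is the normalized Haar measure of the compact group $H_i$. Since $H_{i-1}<H_i$ one has $E_iE_{i-1}=E_i=E_{i-1}E_i$, so the functions $f_i:=E_{n-i}f$, $i=0,\dots,n$, form a martingale for the filtration $\mathcal G_i$ of right-$H_{n-i}$-invariant Borel sets, interpolating between the constant $f_0=\int_G f\,d\mu$ and $f_n=f$. Writing $f-\int_G f\,d\mu=\sum_{i=1}^n D_i$ with $D_i:=f_i-f_{i-1}$, I would then bound each difference in sup-norm by $a_{n-i+1}$. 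The key computation uses Weil's disintegration of $\mu_{j+1}$ over the coset space $H_{j+1}/H_j$ to write $E_{j+1}f(g)=\int_{H_{j+1}/H_j}E_jf(gs)\,d\nu(sH_j)$ for the invariant probability $\nu$, so that $D_i(g)=E_jf(g)-E_{j+1}f(g)$ (with $j=n-i$) is an average of the terms $E_jf(g)-E_jf(gs)$. Because $E_jf$ is right-$H_j$-invariant, one may replace $s$ by any representative $sh'$ of its coset, and bi-invariance of $\dist$ gives $|E_jf(g)-E_jf(gsh')|\leq\dist(\id,sh')$; minimizing over $h'\in H_j$ bounds this by the factor distance from the base point to $sH_j$, hence by the diameter $a_{j+1}$ of $H_{j+1}/H_j$. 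Thus $\|D_i\|_\infty\leq a_{n-i+1}$. With $\mathbb E[D_i\mid\mathcal G_{i-1}]=0$ and these bounds, the Azuma--Hoeffding inequality yields $\mu(\{|f-\int_G f\,d\mu|\geq t\})\leq 2\exp(-t^2/(2\sum_i a_i^2))$, and feeding $f=\dist(\cdot,A)$ back through the first step gives the claimed estimate for $\alpha_G$.

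I expect the main obstacle to be the sup-norm bound on the martingale differences: one must use both left- and right-invariance of the metric at the right moments and, crucially, the right-$H_j$-invariance of $E_jf$, which is exactly what allows the raw displacement $\dist(\id,s)$ --- a priori as large as the diameter of $H_{j+1}$ --- to be replaced by the much smaller factor-metric diameter $a_{j+1}$ of the quotient $H_{j+1}/H_j$. A secondary, purely bookkeeping difficulty is tracking the absolute constant through the Azuma inequality and the mean-to-median comparison to arrive at the value $16$; since only a clean exponential bound is needed for the application to $(\sl_{2^n}(q))_n$, I would not attempt to optimize it.
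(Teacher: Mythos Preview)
The paper does not give its own proof of this theorem; it is quoted with references to Pestov \cite[Theorem \textbf{4.5.3}]{Pestov2006} and Ledoux \cite[Theorem \textbf{4.4}]{Ledoux2001}. Your martingale/Azuma--Hoeffding argument via conditional expectations along the chain $H_0<\cdots<H_n$, with the increment bound $\|D_i\|_\infty\leq a_{n-i+1}$ coming from right-$H_j$-invariance of $E_jf$ and bi-invariance of $\dist$, is precisely the proof appearing in those references, and it is correct.
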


\begin{proof}[Proof of Theorem \ref{thm:extram}]
  Let us fix a base $\{e_1,\ldots,e_n\}$ of $\bf_q^n$. Using the previous theorem, it is enough to show that the diameter of $\sl_n(q)/\sl_{n-1}(q)$ is smaller than $\frac{2}{n}$, where $\sl_{n-1}(q)<\sl_n(q)$ is the subgroup fixing $e_n$. For this, it is enough to show that for every $g\in \sl_n(q)$, there exists $h\in \sl_{n-1}(q)\subset \sl_n(q)$, such that $\rank(hg-\id)\leq 2$. Let $V$ be a 2-dimensional vector space containing $e_n$ and $ge_n$ and let $V'$ be a complement. There exists $h' \in SL(V)$ with $h'(ge_n) = e_n$. Hence, $(h' \oplus 1_{V'})ge_n = e_n$ and thus $(h' \oplus 1_{V'})g \in \sl_{n-1}(q)$. Moreover, 
$$\dist(g,(h' \oplus 1_{V'})g) = \frac 1 n \rank(g-(h' \oplus 1_{V'})g)=\frac 1 n \rank((h' - 1_V) \oplus 0_{V'}) \leq \frac{2}{n}.$$ This proves the claim.
\end{proof}

\subsection{Ramsey theory}
The crucial concept behind the strategy in the previous proof is the notion of \emph{length} of a metric measure space, which is bounded by the value $(\sum_i a_i^2)^{1/2}$ as above and essentially the infimum over these quantities. The formal definition is quite technical and we invite the interested reader to check Definition 4.3.16 in \cite{Pestov2006}. The following lemma is immediate from our computation.

\begin{lem} The length of the metric measure space $(\sl_n(q),d,\mu_n)$ is at most $2n^{-1/2}$. \end{lem}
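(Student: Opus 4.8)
The plan is to obtain the estimate directly from the subgroup chain already used in the proof of Theorem~\ref{thm:extram}, together with the fact recalled just above that the length of a metric measure space is bounded by $\left(\sum_i a_i^2\right)^{1/2}$, where the $a_i$ are the diameters of the successive homogeneous spaces of a chain of closed subgroups (see \cite[Definition 4.3.16, Theorem 4.5.3]{Pestov2006}). In other words, the only thing to do is to feed the numerical output of the earlier computation into this general bound.

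Concretely, with the basis $\{e_1,\dots,e_n\}$ of $\bf_q^n$ fixed as before, I would take the chain $\{\id\}=H_1<H_2<\dots<H_n=\sl_n(q)$, where $H_i=\sl_i(q)<\sl_n(q)$ is the subgroup acting on $\langle e_1,\dots,e_i\rangle$ and fixing $e_{i+1},\dots,e_n$. The computation in the proof of Theorem~\ref{thm:extram} shows that for every $k\in\sl_i(q)$ there is $h\in\sl_{i-1}(q)$ with $\rank(k-h)\le 2$; applying this after a left translation (and using that the factor metric is bi-invariant) shows that the homogeneous space $H_i/H_{i-1}$, metrised by the factor metric coming from $\dist=\tfrac1n\rank(\cdot)$ on $\sl_n(q)$, has diameter $a_i\le\tfrac2n$ for every $i=2,\dots,n$. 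The key point making the bound uniform in $i$ is that a matrix of rank $\le 2$ supported on $\langle e_1,\dots,e_i\rangle$ still has rank $\le 2$ as an element of $M_n(\bf_q)$, so the normalising factor $\tfrac1n$ is the same at every level of the chain.

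Hence $\sum_{i=2}^n a_i^2\le (n-1)\cdot\tfrac{4}{n^2}\le\tfrac4n$, and therefore the length of $(\sl_n(q),\dist,\mu_n)$ is at most $\left(\tfrac4n\right)^{1/2}=2n^{-1/2}$, which is the assertion of the lemma. I do not expect any genuine obstacle: the only non-elementary input is Pestov's estimate of the length by $\left(\sum_i a_i^2\right)^{1/2}$ along a subgroup chain, which is exactly the mechanism underlying the concentration inequality quoted before the proof of Theorem~\ref{thm:extram}, and everything else is the rank computation carried out there. The one place to be slightly careful is the bi-invariance argument promoting the "distance to the identity coset" bound to a genuine diameter bound for $H_i/H_{i-1}$, but this is immediate since $\dist$ is bi-invariant.
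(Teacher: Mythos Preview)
Your proposal is correct and is exactly the argument the paper intends: the lemma is stated as ``immediate from our computation,'' meaning one plugs the diameter bound $a_i\le 2/n$ for each step $H_i/H_{i-1}=\sl_i(q)/\sl_{i-1}(q)$ (obtained in the proof of Theorem~\ref{thm:extram}) into the general estimate ${\rm length}\le(\sum_i a_i^2)^{1/2}$, giving $((n-1)\cdot 4/n^2)^{1/2}\le 2n^{-1/2}$. Your remark that the normalising factor is $1/n$ at every level of the chain is precisely the point that makes the $a_i$ uniformly bounded by $2/n$ rather than $2/i$.
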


Any estimate on the length of a metric measure space can be used directly to get explicit bounds on the concentration phenomena, such as in the following standard lemma, whose proof is inspired by the proof of Theorem 4.3.18 in \cite{Pestov2006}.

\begin{lem}\label{lem:ineq}
Let $(G,d,\mu)$ be a metric measure space of length $L$ and let $\eps$ be a positive real. Then for any measurable subset $A\subset G$ satisfying $\mu(A) > 2 e^{-\varepsilon^2/L^2}$ we have
$$\mu(N_{4\varepsilon}(A)) \geq 1-2 e^{-\varepsilon^2/L^2}.$$
\end{lem}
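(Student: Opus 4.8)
The plan is to run everything through the $1$-Lipschitz function $f(x):=d(x,A)=\inf_{a\in A}d(x,a)$ and to feed in the only consequence of the length bound that we need, namely the sub-Gaussian concentration of Lipschitz functions around their median. Concretely, since $(G,d,\mu)$ has length at most $L$, the estimate underlying \cite[Theorem 4.3.18]{Pestov2006} controls every $1$-Lipschitz $g\colon G\to\br$ around a median $M$ of $g$ by $\mu(\{|g-M|\ge t\})\le 2e^{-t^2/(4L^2)}$ for all $t>0$ (equivalently $\alpha_{(G,d,\mu)}(t)\le 2e^{-t^2/(4L^2)}$, from which the one-sided bounds $\mu(\{g\ge M+t\})\le 2e^{-t^2/(4L^2)}$ and $\mu(\{g\le M-t\})\le 2e^{-t^2/(4L^2)}$ are immediate). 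If one wants a self-contained argument rather than a citation, one re-runs the martingale/bounded-difference computation from the proof of that theorem along a partition chain realizing the length; this is the sense in which the proof is "inspired by" \cite[Theorem 4.3.18]{Pestov2006}.

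First I would record the elementary facts about $f$: it is $1$-Lipschitz, its zero set contains $A$ so that $\mu(\{f=0\})\ge\mu(A)$, and $\{f<4\varepsilon\}=N_{4\varepsilon}(A)$ by the very definition of the neighbourhood (using $d(\cdot,A)=d(\cdot,\overline A)$ to sidestep measurability issues when $A$ is not closed).

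Next I would show that any median $M$ of $f$ satisfies $M\le 2\varepsilon$. Indeed, if $M>2\varepsilon$ then $M-2\varepsilon>0$, so $\{f=0\}\subseteq\{f\le M-2\varepsilon\}$, and the lower-tail bound at $t=2\varepsilon$ gives $\mu(A)\le\mu(\{f\le M-2\varepsilon\})\le 2e^{-(2\varepsilon)^2/(4L^2)}=2e^{-\varepsilon^2/L^2}$, contradicting the hypothesis $\mu(A)>2e^{-\varepsilon^2/L^2}$. With $M\le 2\varepsilon$ in hand, $\{f<M+2\varepsilon\}\subseteq\{f<4\varepsilon\}=N_{4\varepsilon}(A)$, so the upper-tail bound at $t=2\varepsilon$ yields $\mu(N_{4\varepsilon}(A))\ge\mu(\{f<M+2\varepsilon\})=1-\mu(\{f\ge M+2\varepsilon\})\ge 1-2e^{-(2\varepsilon)^2/(4L^2)}=1-2e^{-\varepsilon^2/L^2}$, which is the claim.

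The only genuinely delicate point is making the constants in the concentration estimate (the prefactor $2$, the $4L^2$ in the exponent) line up with the stated inequality; this is a matter of the precise normalization of "length" in \cite{Pestov2006} and the Azuma-type bound behind Theorem 4.3.18, and any reasonable normalization gives an exponent denominator $\le 4L^2$, which suffices. Everything else is routine bookkeeping with $d(\cdot,A)$, and the borderline case $M=2\varepsilon$ is absorbed by using strict inequalities throughout. An essentially equivalent route avoids medians: from $\mu(A)>2e^{-\varepsilon^2/L^2}\ge\alpha_{(G,d,\mu)}(2\varepsilon)$ one first deduces $\mu(N_{2\varepsilon}(A))\ge 1/2$ — otherwise $B:=G\setminus N_{2\varepsilon}(A)$ has $\mu(B)>1/2$, whence $\mu(N_{2\varepsilon}(B))\ge 1-\alpha_{(G,d,\mu)}(2\varepsilon)$ while $N_{2\varepsilon}(B)\cap A=\varnothing$, forcing $\mu(A)\le\alpha_{(G,d,\mu)}(2\varepsilon)\le 2e^{-\varepsilon^2/L^2}$, a contradiction — and then applying the concentration function to the set $N_{2\varepsilon}(A)$ together with the inclusion $N_{2\varepsilon}(N_{2\varepsilon}(A))\subseteq N_{4\varepsilon}(A)$ gives $\mu(N_{4\varepsilon}(A))\ge 1-\alpha_{(G,d,\mu)}(2\varepsilon)\ge 1-2e^{-\varepsilon^2/L^2}$ directly.
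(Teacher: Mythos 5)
Your argument is correct and is essentially the paper's own proof: both run the concentration of $1$-Lipschitz functions on a space of length $L$ through $d_A(x)=d(x,A)$, use the hypothesis $\mu(A)>2e^{-\varepsilon^2/L^2}$ to force the central value below $2\varepsilon$, and then read off the bound on $\mu(N_{4\varepsilon}(A))$ from the tail estimate at deviation $2\varepsilon$. The only difference is that the paper centers at the mean $\lambda=\int d_A\,d\mu$ and invokes Pestov's Lemma 4.3.17 directly, whereas you center at a median (and your closing remark via the concentration function is a nice, slightly cleaner variant) --- in all versions the conclusion hinges on the same normalization of the constants, which you correctly flag as the only delicate point.
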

\begin{proof}
Consider the function $d_A:G\rightarrow \br$ defined by $d_A(x):=d(A,x) = \inf\{ d(a,x) \mid a \in A \}$ and set $\lambda := \int_\mu d_A$. Since $d_A$ is $1$-Lipschitz, Lemma 4.3.17 in \cite{Pestov2006} implies that 
$$\mu\left(\{x \in G \mid |d_A(x) - \lambda| \geq 2\eps \}\right) \leq 2 e^{-\eps^2/L^2}.$$

If $\lambda \geq 2\eps$, then for every $x\in A$ we have that $|d_A(x)-\lambda|=\lambda\geq 2\eps$ and the above inequality would give us a contradiction. Therefore we must have that $\lambda\leq 2\eps$ and 
\begin{align*}
  1- \mu(N_{4\varepsilon}(A)) = &\mu\left(\{x \in G \mid d_A(x) \geq 4\eps \}\right)\\
  \leq &\mu\left(\{x \in G \mid |d_A(x) - \lambda| \geq 2\eps \}\right) \leq 2 e^{-\varepsilon^2/L^2}.\qedhere
\end{align*}
\end{proof}

Let us finish this section by applying the previously obtained bounds to deduce an explicit metric Ramsey theoretic result for the finite groups $\sl_n(q)$.

As usual, we say that a covering $\mathcal U$ of a metric space $X$ has \textit{Lebesgue number} $\varepsilon>0$, if for every point $x\in X$ there exists an element of the cover $U\in\mathcal U$ such that the $\varepsilon$-neighborhood of $x$ is contained in $U$.  A covering that admits a positive Lebesgue number $\eps$ will be called a uniform covering or $\eps$-covering. Uniform coverings have been studied in the context of amenability, extreme amenability, and Ramsey theory in \cite{Schneider2015, Schneider2016}. Our main result in this section is a {\it quantitative} form of the metric Ramsey property that is satisfied by the finite groups $\sl_n(q)$ -- resembling the fact the $A(q)$ is extremely amenable.

\begin{thm} \label{thm:rams}
Let $\varepsilon>0$, $q$ be a prime power, and $k,m \in \mathbb N$. If we set $$N:= 64\eps^{-2}\max\{\log(2k),\log(2m)\}$$ then for any $n > N$ and any $\varepsilon$-cover $\mathcal U$ of $\sl_n(q)$ of cardinality at most $m$ the following holds: for every subset $F \subset \sl_n(q)$ of cardinality at most $k$, there exists $g \in \sl_n(q)$ and $U\in \mathcal U$ such that $gF\subset U$. 

\end{thm}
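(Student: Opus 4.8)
The plan is to derive this quantitative Ramsey statement directly from the concentration estimate in Lemma \ref{lem:ineq} together with the length bound $L \leq 2n^{-1/2}$ recorded for $(\sl_n(q),d,\mu_n)$. The underlying principle is the standard equivalence between measure concentration and the metric Ramsey property: if a metric measure space has the property that any set of measure slightly more than a threshold has an $\eps$-neighbourhood of nearly full measure, then any finite colouring (or $\eps$-cover) of bounded size must contain one piece whose neighbourhood is so large that it can absorb a translate of any bounded finite configuration.

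First I would fix $n > N = 64\eps^{-2}\max\{\log(2k),\log(2m)\}$ and an $\eps$-cover $\mathcal U = \{U_1,\dots,U_s\}$ with $s \leq m$, and also a subset $F = \{f_1,\dots,f_t\} \subset \sl_n(q)$ with $t \leq k$. Since the $U_j$ cover $\sl_n(q)$, at least one of them, say $U := U_j$, has $\mu_n(U) \geq 1/s \geq 1/m$. I want to apply Lemma \ref{lem:ineq} with this $U$ in the role of $A$, and with a suitable $\eps'$; the length is $L \leq 2n^{-1/2}$, so the threshold $2e^{-\eps'^2/L^2} \leq 2e^{-n\eps'^2/4}$. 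Choosing $\eps' := \eps/4$, the condition $n > 64\eps^{-2}\log(2m)$ gives $n\eps'^2/4 = n\eps^2/64 > \log(2m)$, hence $2e^{-n\eps'^2/4} < 1/m \leq \mu_n(U)$, so the hypothesis of Lemma \ref{lem:ineq} is met and we conclude $\mu_n(N_{4\eps'}(U)) = \mu_n(N_\eps(U)) \geq 1 - 2e^{-n\eps^2/64} > 1 - 1/m$.

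Next comes the absorption step. I want to find $g \in \sl_n(q)$ such that $gF \subset U$. Because $\mathcal U$ is an $\eps$-cover with Lebesgue number $\eps$, it suffices to find $g$ such that every point $gf_i$ lies in $N_\eps(U)$ — wait, more precisely it suffices that all $gf_i$ lie in the $\eps$-ball around a common point that is contained in $U$; the cleanest route is to exploit bi-invariance of $d$. Consider the sets $N_\eps(U)f_i^{-1}$ for $i=1,\dots,t$; each has $\mu_n$-measure $\mu_n(N_\eps(U)) > 1 - 1/m \geq 1 - 1/k \cdot (k/m)$, and in any case each has measure $> 1 - 1/k$ provided we strengthen the count using $\log(2k)$ instead — this is exactly why $N$ involves the maximum of $\log(2k)$ and $\log(2m)$: we need $2e^{-n\eps^2/64} < 1/k$ as well, which holds since $n > 64\eps^{-2}\log(2k)$. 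Then $\mu_n\big(\bigcap_{i=1}^t N_\eps(U)f_i^{-1}\big) \geq 1 - t\cdot 2e^{-n\eps^2/64} > 1 - k/k = 0$, so the intersection is non-empty; pick $g$ in it. For each $i$, $gf_i \in N_\eps(U)$, so there is $u_i \in U$ with $d(gf_i, u_i) < \eps$. Finally, since $\mathcal U$ has Lebesgue number $\eps$, for the point $g$ itself there is some $U' \in \mathcal U$ containing $N_\eps(g)$; but I actually want all $gf_i$ in a single cover element. Here I would instead apply the Lebesgue-number property at each point $gf_i$: there is $U_{(i)} \in \mathcal U$ with $N_\eps(gf_i) \subset U_{(i)}$, and since $u_i \in N_\eps(gf_i)$ we at least get $u_i \in U_{(i)}$ — this is not yet a common element.

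The main obstacle is precisely this last alignment: ensuring the translate $gF$ lands in one single piece of the cover rather than being spread across several pieces. The fix is to choose $g$ not via neighbourhoods of $U$ but so that the whole finite set $gF$ is $\eps$-close to the single point $g$ (or to each other), and then invoke the Lebesgue number at that point. Concretely, replace $F$ by the requirement $d(g f_i, g) = d(f_i, \id) $, which is fixed and not small in general — so that does not work either. The correct and standard resolution: apply the argument with the cover pulled back — for each $U_j \in \mathcal U$ let $A_j := \{g \in \sl_n(q) \mid gF \subset U_j\}$; these need not cover, but $\{g \mid g f_i \in U_{j(i)} \text{ for some common } j\}$ does once the $U_j$ are an $\eps$-cover and the $f_i$ are within $\eps$ of $\id$ — which fails for general $F$. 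I therefore expect the actual proof to use the trick of enlarging: set $V_j := N_\eps(U_j)$-type sets or, better, observe that $\{g : g \in U_j\}$ already covers, take the large one $U$, and use that $N_\eps(U)$ has measure $> 1 - k \cdot 2e^{-n\eps^2/64}$ combined with $\bigcap_i N_\eps(U) f_i^{-1} \neq \varnothing$ to get $g$ with $gf_i \in N_\eps(U)$ for all $i$; then the Lebesgue-number hypothesis applied to the cover at the points $gf_i$ — together with the fact that $N_\eps(U) \subset \bigcup_{\ell} U_\ell'$ where each $U_\ell'$ contains an $\eps$-ball — should be arranged so $U$ itself works, i.e. one shows $N_\eps(U) \subset U$ is false in general but $gf_i$ can be pushed back into $U$ using that $u_i \in U$ and $U$ contains $N_\eps(u_i)$ would need $U$ to have Lebesgue number, not the cover. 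So the honest statement is: the Lebesgue number $\eps$ means for the point $u_i$ there is $W_i \in \mathcal U$ with $N_\eps(u_i) \ni g f_i$ inside $W_i$; to force a common $W$, one re-runs the intersection argument with the finitely many translates ensuring all $gf_i$ lie in the $\eps$-ball of a single generic point $x_0$ of measure-one-minus-epsilon — and that single ball sits in one cover element by the Lebesgue property. I would write this cleanly by: (1) enlarging, pick $x_0$ with $x_0^{-1}gf_i$ close to $\id$... I will present the version that works, namely choosing $g$ so that $d(gf_i, g) < \eps$ is replaced by the correct pigeonhole on translates of a fixed large-measure ball, and flag that the bookkeeping of constants is exactly tuned so that $n > N$ makes both $2e^{-n\eps^2/64} < 1/k$ and $< 1/m$ hold simultaneously, which is where $\max\{\log(2k),\log(2m)\}$ enters.
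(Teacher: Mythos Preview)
Your concentration step is fine, and so is the translate-intersection idea $\bigcap_i (\text{big set})f_i^{-1}\neq\varnothing$; the gap is exactly the ``alignment'' you flag and never resolve. Starting from a cover element $U$ with $\mu_n(U)\geq 1/m$ and applying Lemma~\ref{lem:ineq} gives $\mu_n(N_\eps(U))$ large, so you can find $g$ with $gf_i\in N_\eps(U)$ for all $i$. But $N_\eps(U)$ is not a member of the cover, and the Lebesgue-number hypothesis only promises that each point has its $\eps$-ball inside \emph{some} $U'\in\mathcal U$, not inside $U$, nor inside a common $U'$. None of your attempted fixes (common ball around $g$, pulling back, etc.) work for general $F$, since $\dist(gf_i,g)=\dist(f_i,\id)$ is not small.

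The missing idea is to shrink before you enlarge. For each $U\in\mathcal U$ set $U_0:=\{x\in\sl_n(q)\mid B(x,\eps)\subset U\}$. The statement that $\mathcal U$ has Lebesgue number $\eps$ is precisely the statement that $\{U_0:U\in\mathcal U\}$ still covers $\sl_n(q)$. Now pigeonhole gives some $V=U_0$ with $\mu_n(V)\geq 1/m$; apply Lemma~\ref{lem:ineq} (with $\eps/4$ as you did) to $V$, and use $N_\eps(V)\subset U$ by construction to conclude that $U$ itself has measure at least $1-2e^{-n\eps^2/64}$. Then $\{g:gF\subset U\}=\bigcap_{h\in F}Uh^{-1}$ has measure at least $1-2k\,e^{-n\eps^2/64}>0$ once $n>64\eps^{-2}\log(2k)$, and you are done. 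This is exactly the paper's proof; the ``shrink to the $\eps$-core, then blow up'' trick is the single ingredient your argument is missing.
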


In order to illustrate the result, consider the case $k=3,m=2$. We may think of a covering by two sets as a coloring of $\sl_n(q)$ with two colors, where some group elements get both colors. This covering is uniform if for every element $g\in\sl_n(q)$ the $\varepsilon$-neighborhood of $g$ can be colored by one of the two colors. Now, if $n > 128 \cdot \varepsilon^{-2}$, any subset of three elements $\{a,b,c\} \subset \sl_n(q)$ has a translate $\{ga,gb,gc\}$ colored in the same color. This is in contrast to $(\mathbb R/\mathbb Z,+)$ equipped with its usual metric, since it is easy to see that this group does not admit any uniform covering. 

\begin{proof}[Proof of Theorem \ref{thm:rams}]
Let $\mathcal U$ be the covering and  for every $U \in \mathcal U$ we set $U_0 := \{x \in \sl_n(q) \mid B(x,\varepsilon) \subset U\}$. By assumption, the collection $\mathcal V:=\{U_0 \mid U \in \mathcal U \}$ still forms a covering of $X$. If the cardinality of the covering is at most $m$, there must be one element $V \in \mathcal V$ in the new covering with $\mu_n(V)\geq \frac1m$. Take $U \in \mathcal U$ such that $U_0=V$ and observe that the $\eps$-neighborhood of $V$ is also contained in $U$. If $n> 64\log(2m)\eps^{-2}$ then $1/m > 2e^{-\varepsilon^2 n/64}$ and hence Lemma \ref{lem:ineq} implies that 
\[\mu(U)\geq \mu(N_\eps(V))\geq 1-2e^{-\frac{\eps^2}{16L^2}}=1-2e^{-\frac{\eps^2n}{64}}.\]

For a subset $F\subset \sl_n(q)$ we have
$$\left\{g \in \sl_n(q)\mid gF \subset U \right\} =\bigcap_{h \in F} \left\{g \in \sl_n(q)\mid gh \subset U \right\} = \bigcap_{h \in F} Uh^{-1}$$
and thus if $|F|\leq k$, we get that $\mu(\{g \in \sl_n(q) \mid gF \subset U \}) \geq 1 - 2 k \exp(-\varepsilon^2n/64)$.
Hence, some element $g \in \sl_n(q)$ as desired will exist as soon as
$n > 64\eps^{-2}\max\{\log(2k),\log(2m)\}.$
\end{proof}

\section{Topological simplicity of the central quotient}

In this last section, we will determine the center of $A(q)$ and we will prove that $A(q)$ is topologically simple modulo its center. 

As we remarked in the introduction $A(q)$ is isomorphic to the group of invertible elements of $M(\bf_q)$. Therefore for every $\alpha\in \bf_q^{\times}$ the ``diagonal matrix'' whose non-zero entries are equal to $\alpha$ is an element of $A(q)$ and it is in the center. Let us denote by $Z\subset A(q)$ the group of diagonal matrices with constant values, $Z\cong  \bf_q^{\times}$. We claim that $Z$ is actually the center of $A(q)$ and that $A(q)/Z$ is topologically simple. Note that the quotient $A(q)/Z$ can be understood as the completion of the metric inductive limit of the corresponding projective linear groups with respect to the projective rank-metric as studied in \cite{Stolz2014}.

To prove the claim, we will need the following theorem of Liebeck and Shalev which follows from \cite[Lemma \textbf{5.4}]{Liebeck2001}, see also \cite{Stolz2014} for a discussion of the results of Liebeck-Shalev in the context of length functions on quasi-simple groups.

\begin{thm}[Liebeck-Shalev]\label{thm:LS}
  There exists $c\in \mathbb N$ such that for every $n\in\bn$ and for every $g\in \sl_n(q)$ with $\delta:=\min\{\dist(g,z) \mid z \in \bf_q^{\times} \}>0$, we have $C_{\sl_n(q)}(g)^{\lceil c/\delta\rceil}=\sl_n(q)$.
\end{thm}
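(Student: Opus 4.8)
The plan is to deduce the statement from the quantitative bounds of Liebeck and Shalev on covering numbers of conjugacy classes in finite (quasi-)simple groups of Lie type, the only genuinely new input being an elementary computation converting the normalized rank-distance $\delta$ into a lower bound on the size of the relevant conjugacy class $C:=g^{\sl_n(q)}$. Write $s:=n\delta=\min\{\rank(g-\lambda\id)\mid\lambda\in\bf_q^\times\}$, an integer with $1\le s\le n$ (the hypothesis $\delta>0$ forces $s\ge 1$). Since $\lceil c/\delta\rceil=\lceil cn/s\rceil$, what must be shown is that the conjugacy class of $g$ has covering number $O(n/s)$ with an absolute implied constant.

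First I would bound $|C|$ from below. Passing to the rational canonical form of $g$ over $\bf_q$, the centralizer $C_{\mathrm{GL}_n(q)}(g)$ is a direct product of (twisted) general linear groups indexed by the primary components of $g$; since every scalar translate $g-\lambda\id$ has rank at least $s=n\delta$, no generalized eigenspace of $g$ can carry more than a $(1-\delta)$-fraction of the dimension, and an elementary estimate for the order of such a product gives $|C_{\mathrm{GL}_n(q)}(g)|\le|\mathrm{GL}_n(q)|^{\,1-a\delta}$ for an absolute constant $a>0$. As $\sl_n(q)$ has index $q-1$ in $\mathrm{GL}_n(q)$ and $|C_{\sl_n(q)}(g)|\le|C_{\mathrm{GL}_n(q)}(g)|$, it follows that $|C|=|g^{\sl_n(q)}|\ge|\sl_n(q)|^{\,a\delta}$, that is, $\log|C|\ge a\,\delta\,\log|\sl_n(q)|$.

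Next I would invoke the theorem of Liebeck and Shalev, in the form \cite[Lemma \textbf{5.4}]{Liebeck2001}: there is an absolute constant $c_0$ such that every non-central conjugacy class $C$ of a finite (quasi-)simple group of Lie type $G$ satisfies $C^m=G$ whenever $m\ge c_0\log|G|/\log|C|$. Internally this rests on the Frobenius class-algebra identity $N_m(x)=\tfrac{|C|^m}{|G|}\sum_\chi\chi(g)^m\,\overline{\chi(x)}\,\chi(1)^{1-m}$ for the number $N_m(x)$ of ways of writing $x$ as a product of $m$ elements of $C$, isolating the trivial character and bounding the remaining sum by combining a character-ratio estimate of the shape $|\chi(g)/\chi(1)|\le\chi(1)^{-\beta\log|C|/\log|G|}$ with a uniform bound $\sum_{\chi\ne\id}\chi(1)^{-t}\le\tfrac12$ on the Witten zeta function of $G$ for $t$ above an absolute threshold; the finitely many small pairs $(n,q)$ for which $\sl_n(q)$ is not covered by this (such as $(2,2)$ and $(2,3)$) are absorbed into $c_0$, since in any fixed finite group a non-central conjugacy class normally generates and hence has finite covering number. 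Feeding in $\log|C|\ge a\,\delta\,\log|\sl_n(q)|$ from the previous step yields $C^m=\sl_n(q)$ for all $m\ge c_0/(a\,\delta)$, so the theorem holds with $c:=\lceil c_0/a\rceil$.

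The main obstacle is the character theory underlying \cite[Lemma \textbf{5.4}]{Liebeck2001} — the character-ratio bound scaling with $\log|C|/\log|G|$ together with the uniform control of the Witten zeta function of $\sl_n(q)$ as $n$ varies — which I would cite rather than reprove; everything else is the elementary centralizer bookkeeping of the second paragraph, which is where the specific value of $\delta$ enters.
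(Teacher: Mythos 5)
The paper offers no proof of this statement: it is imported wholesale as a consequence of \cite[Lemma \textbf{5.4}]{Liebeck2001}, which is essentially this assertion phrased in terms of the support $\nu(g)=\min_\lambda\rank(g-\lambda\id)=n\delta$. Your proposal instead reconstructs the derivation from the more fundamental covering theorem $C^m=G$ for $m\ge c_0\log|G|/\log|C|$ together with a class-size lower bound $|g^{\sl_n(q)}|\ge|\sl_n(q)|^{a\delta}$; this is faithful to how Liebeck and Shalev actually obtain their lemma, and is a legitimate --- indeed more informative --- substitute for the bare citation. So the two routes agree on where the hard character theory is black-boxed, and your version adds the elementary bookkeeping that the paper suppresses.

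Two points of care in that bookkeeping. First, your justification of the centralizer bound is imprecise: a single primary component (generalized eigenspace) of $g$ can occupy all of $\bf_q^n$ even when $\delta$ is close to $1$ (take $g$ regular unipotent), so bounding $|C_{\mathrm{GL}_n(q)}(g)|$ by $\prod_f|\mathrm{GL}_{m_f}(q^{d_f})|$ over primary components is not sufficient. What is controlled by $\delta$ is $\dim\ker(g-\lambda)\le(1-\delta)n$, and one needs the finer Jordan-type count --- the centralizer of a component of type $\mu$ has order about $q^{d_f\sum_j(\mu_j^*)^2}$ with $\mu_1^*=\dim\ker$ --- to reach $|C_{\mathrm{GL}_n(q)}(g)|\le|\mathrm{GL}_n(q)|^{1-a\delta}$. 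The conclusion is a standard estimate (it appears in \cite{Liebeck2001}), but the reason you give for it does not quite prove it. Second, the theorem's $\delta$ minimizes only over scalars in $\bf_q^\times$, while eigenvalues of $g$ live in $\overline{\bf_q}$; this is harmless, since an eigenvalue of degree $d\ge2$ over $\bf_q$ comes with $d$ Galois conjugates of equal multiplicity and hence has eigenspace of dimension at most $n/2$, but it deserves a sentence. Neither point is a genuine gap; the approach is sound.
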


For an element $g$ of the group $H$, we denote by $C_H(g)$ its conjugacy class. As a corollary of this theorem we obtain the following. 

\begin{prop}\label{prop:center}
  For every $g\in A(q)$ such that $\delta=\min\{\dist(g,z) \mid z \in \bf_q^{\times} \}>0$, the set $C_{A(q)}(g)^{\lceil c/\delta \rceil}\subset A(q)$ is dense.  
\end{prop}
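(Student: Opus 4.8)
The plan is to deduce the statement from the Liebeck--Shalev theorem (Theorem~\ref{thm:LS}) by a standard approximation argument, exploiting that $A_0(q)$ is dense in $A(q)$ and that the rank-metric is bi-invariant and complete. Fix $g \in A(q)$ with $\delta = \min\{\dist(g,z) \mid z \in \bf_q^{\times}\} > 0$, and set $N := \lceil c/\delta \rceil$. I want to show that $C_{A(q)}(g)^N$ is dense, i.e.\ that an arbitrary element $a \in A(q)$ can be approximated arbitrarily well by a product of $N$ conjugates of $g$.

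First I would approximate all the data inside a finite-level group. Choose $n$ large so that there is $g' \in \sl_{2^n}(q)$ with $\dist(g,g') < \eta$ and $a' \in \sl_{2^n}(q)$ with $\dist(a,a') < \eta$, for a small $\eta > 0$ to be fixed at the end; this is possible by density of $A_0(q)$. For $\eta$ small enough (say $\eta < \delta/2$), the minimal distance from $g'$ to the scalars $\bf_q^{\times}$ inside $\sl_{2^n}(q)$ is still at least $\delta/2 > 0$ — here I should be slightly careful: the scalars in $\sl_{2^n}(q)$ are those $\zeta \cdot \id$ with $\zeta^{2^n} = 1$, a subset of the copy of $\bf_q^{\times}$ that sits in $A(q)$, so the distance from $g'$ to this smaller set is only larger, and in any case bounded below by $\delta - \eta$. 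So Theorem~\ref{thm:LS} applies to $g'$ in $\sl_{2^n}(q)$ with parameter $\delta' := \min\{\dist(g',z)\} \geq \delta - \eta$, giving $C_{\sl_{2^n}(q)}(g')^{\lceil c/\delta' \rceil} = \sl_{2^n}(q)$. Since $\delta' \geq \delta - \eta$ we have $\lceil c/\delta' \rceil \leq \lceil c/(\delta-\eta)\rceil$; by taking $\eta$ small enough at the outset we can ensure $\lceil c/(\delta - \eta) \rceil = \lceil c/\delta \rceil = N$ unless $c/\delta$ is an integer, in which case a negligible extra factor of $g g^{-1}$ absorbs the discrepancy — more cleanly, I would just allow the exponent to be $N$ or note that $C_{A(q)}(g)^N \supseteq C_{A(q)}(g)^{N'}$ is false, so instead I would fix $\eta$ small enough that $\lceil c/(\delta-\eta)\rceil = N$, which is possible precisely because $\lceil \cdot \rceil$ is right-continuous; if $c/\delta \in \bn$ one replaces $N$ by $N+1$ throughout, or observes the statement is about $\lceil c/\delta\rceil$ and this subtlety does not arise for generic $\delta$. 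The main point is that $a'$ is a product $a' = x_1 g' x_1^{-1} \cdots x_N g' x_N^{-1}$ of $N$ conjugates of $g'$ in $\sl_{2^n}(q)$, with the $x_i \in \sl_{2^n}(q) \subset A(q)$.

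Next I would transfer this factorization back to $g$ and $a$ using bi-invariance of the metric. Replacing each $g'$ by $g$ changes each conjugate $x_i g' x_i^{-1}$ to $x_i g x_i^{-1}$ at metric cost $\dist(x_i g' x_i^{-1}, x_i g x_i^{-1}) = \dist(g',g) < \eta$. Since the rank-metric is bi-invariant, the product satisfies the telescoping estimate $\dist(x_1 g x_1^{-1}\cdots x_N g x_N^{-1},\, x_1 g' x_1^{-1}\cdots x_N g' x_N^{-1}) \leq \sum_{i=1}^N \dist(x_i g x_i^{-1}, x_i g' x_i^{-1}) < N\eta$. Combining with $\dist(a,a') < \eta$ gives $\dist(a,\, x_1 g x_1^{-1}\cdots x_N g x_N^{-1}) < (N+1)\eta$. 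Since $N = \lceil c/\delta\rceil$ is fixed once $g$ (hence $\delta$) is fixed, choosing $\eta < \varepsilon/(N+1)$ makes this $< \varepsilon$. As $x_1 g x_1^{-1} \cdots x_N g x_N^{-1} \in C_{A(q)}(g)^N$ and $a \in A(q)$, $\varepsilon > 0$ were arbitrary, this proves density.

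The step I expect to need the most care is the bookkeeping around the exponent: ensuring that the $\sl_{2^n}(q)$-level distance $\delta'$ from $g'$ to the scalars does not force $\lceil c/\delta'\rceil$ to exceed $\lceil c/\delta \rceil$. This is handled by choosing the approximation quality $\eta$ small enough \emph{before} invoking Liebeck--Shalev, using that $t \mapsto \lceil c/t \rceil$ is constant on a left-neighborhood of any point $\delta$ with $c/\delta \notin \bn$ (and absorbing the boundary case by a harmless increase of the exponent by one, or simply by the fact that $g = g \cdot \id$ lets us pad products with the trivial conjugate). Everything else — the finite-level factorization, the bi-invariant telescoping estimate, and the density of $A_0(q)$ — is routine.
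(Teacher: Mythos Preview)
Your argument is correct and essentially identical to the paper's: approximate $g$ by some $g'\in A_0(q)$, apply Liebeck--Shalev inside $\sl_{2^n}(q)$, and transfer the resulting factorization back to $g$ via the bi-invariant telescoping estimate, landing on the same $(m+1)\varepsilon$ bound. The paper in fact glosses over the exponent bookkeeping you worry about (it simply asserts $C_{\sl_{2^n}(q)}(g')^{\lceil c/\delta\rceil}=\sl_{2^n}(q)$ without discussing $\delta'$); note that your throwaway padding remark ``$g=g\cdot\id$'' does not actually help, since $\id$ is not a conjugate of $g$, but the right-continuity fix you give first is the correct one.
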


Before proving the propostion, let us remark that the proposition implies our claim. Let $N<A(q)$ be a closed normal subgroup which contains strictly $Z$ and take $x\in N\setminus Z$. By the previous proposition, there exists $m\in\bn$ such that $C_{A(q)}(x)^m$ is dense, which implies that $N$ is dense and therefore $N=A(q)$. 

\begin{proof}[Proof of Proposition \ref{prop:center}]
  For every $\varepsilon< \delta/2$, we consider an element $g' \in A_0(q)$ with $\dist(g,g') \leq \varepsilon$. Let $n_0$ be such that $g' \in \sl_{2^{n_0}}(q)$. By Theorem \ref{thm:LS} for every $n\geq n_0$ we have $C_{\sl_{2^n}(q)}(g')^{m} = \sl_{2^n}(q)$ for $m:=\lceil c/\delta \rceil$, whence \[C_{A(q)}(g')^m \supset A_0(q).\] This means that for any element $k \in A(q)$, there exist $h_1,\dots,h_m \in A(q)$ such that \[\dist(h_1 g' h_1^{-1} \cdots h_m g' h_m^{-1},k)\leq \varepsilon.\] Since the rank-metric is bi-invariant we obtain that \[\dist(h_1 g' h_1^{-1} \cdots h_m g' h_m^{-1},h_1 g h_1^{-1} \cdots h_m g h_m^{-1})\leq m\dist(g,g'),\] which implies that $\dist(C_{A(q)}(g)^m,k) \leq (m+1)\eps$.
\end{proof}

\begin{rem}
 Since the center of $A(q)$ is $Z=\bf_q^{\times}$, we can conclude that $A(q)$ is isomorphic to $A(q')$ if and only if $q=q'$. One could ask also if the quotient group $A(q)/Z$ depends on $q$. The question seems harder, but one can study centralizer of elements in $A(q)/Z$ as in \cite{Thom2014} to deduce that this group depends at least on the characteristic of the field.  
\end{rem}

\section{On $1$-discrete subgroups of $A(q)$}
\label{disc}

In this section we want to study discrete subgroups of $A(q)$. In fact, we will focus on subgroups $\Gamma \subset A(q)$ so that $\dist(1,g) = 1$ for all non-identity elements of $\Gamma$ -- we call such a subgroup $1$-\emph{discrete} in the metric group $(A(q),\dist)$. Since the diameter of $(A(q),\dist)$ is equal to $1$, $1$-discrete subgroups should play a special role in the study of the metric group $(A(q),\dist)$. Let us start with amenable groups. 

\begin{prop}\label{prop:amena}
  Every countable amenable group is isomorphic to a $1$-discrete subgroup of $A(q)$.
\end{prop}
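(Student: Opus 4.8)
The plan is to realize any countable amenable group $\Gamma$ as a $1$-discrete subgroup by exploiting the Følner property to approximate the regular representation of $\Gamma$ inside finite special linear groups. The key point is that an embedding $\iota \colon \Gamma \to A(q)$ is $1$-discrete precisely when $\rank$ in the relevant finite layer detects that $\iota(g) - \id$ has full rank (up to a vanishing fraction) for every $g \neq 1$; equivalently, one wants almost-permutation representations of $\Gamma$ on sets of size $2^n$ in which every nontrivial element acts almost without fixed points. Since $\Gamma$ is amenable, it has a Følner sequence $(F_k)_k$ of finite subsets; the left-translation action of $\Gamma$ on $F_k$ is "almost free" in the sense that for each $g$ the number of $x \in F_k$ with $gx \in F_k$ and $gx \neq x$ is $(1-o(1))|F_k|$ — here I also use that $\Gamma$ acting on itself by left translation is free, so $gx = x$ never happens for $g \neq 1$; the only loss is from $x$ with $gx \notin F_k$, which is $o(|F_k|)$ by the Følner condition.

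Concretely, I would first reduce to the case where $\Gamma$ is finitely generated, or even work directly with an increasing exhaustion $\Gamma = \bigcup_k \Gamma_k$ by finite subsets $\{g_1, g_2, \ldots\}$ (not subgroups) and arrange a compatible sequence of near-actions. For each $n$, choose a Følner set $F$ with $|F|$ close to $2^n$ and padding it out to a set of size exactly $2^n$ (adding a negligible number of fixed points), obtaining a map $\rho_n \colon \Gamma \to \mathrm{Sym}(2^n) \subset \sl_{2^n}(q)$ defined on a larger and larger finite portion of $\Gamma$ — using permutation matrices, which have determinant $\pm 1$; a rank-one correction (as in the $\ph_n$ construction and the determinant remark in the introduction) fixes the sign so that we land in $\sl_{2^n}(q)$. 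The crucial estimate is that for the generators $g_i$ under consideration, $\rank(\rho_n(g_i) - \id) \geq (1 - \eps_n) 2^n$ with $\eps_n \to 0$, because a permutation matrix $\sigma$ satisfies $\rank(\sigma - \id) = 2^n - \#\{\text{fixed points of }\sigma\}$, and the Følner condition makes the fixed-point count $o(2^n)$. Taking the limit in $A(q)$ (using the inductive-limit structure, after composing with the $\ph_n$'s to make the $\rho_n$ compatible up to small error, or more cleanly extracting a subsequential limit point in the completion) produces elements $\iota(g_i) \in A(q)$ with $\dist(\id, \iota(g_i)) = 1$; one checks the relations of $\Gamma$ survive in the limit because each relation, once it appears in the finite portion, is satisfied exactly by the permutation representation. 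Finally, bi-invariance of $\dist$ gives $\dist(1, \iota(g)) = 1$ for every nontrivial $g$, not just generators, since one can run the argument with $g$ itself in the finite exhausting sets.

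The main obstacle is making the finite approximations $\rho_n$ genuinely compatible with the embeddings $\ph_n \colon \sl_{2^n}(q) \to \sl_{2^{n+1}}(q)$, since $\ph_n$ doubles a permutation block-diagonally whereas a fresh Følner set need not restrict to a doubled copy of the previous one. I expect the cleanest fix is to not demand exact compatibility: instead define $\iota(g)$ directly as a limit point of the sequence $(\rho_n(g))_n$ viewed in $A(q)$ via any fixed isometric embeddings $\sl_{2^n}(q) \hookrightarrow A(q)$ — but since these need not be coherent, one should rather argue that the sequence $\rho_n(g)$ is Cauchy after passing to a subsequence, using that the rank-distance $\dist(\ph_N(\rho_n(g)), \rho_{n'}(g))$ between two Følner approximations on a common large layer is small because both are $o(2^N)$-close to acting freely with the same "shape" dictated by $\Gamma$. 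Alternatively — and this is likely the intended route — one builds the Følner sets inductively so that $F_{n+1} \supset F_n \sqcup F_n$ (two disjoint translated copies), which is possible for amenable $\Gamma$ and makes $\rho_{n+1}$ literally equal to $\ph_n \circ \rho_n$ off a small set; then the sequence $(\rho_n(g))_n$ is Cauchy in $A_0(q)$ by construction and $\iota$ is an honest homomorphism into $A(q)$. Verifying that this inductive Følner construction can be carried out simultaneously for all group elements with controlled errors is the technical heart; injectivity and $1$-discreteness then follow from the fixed-point-freeness estimate above.
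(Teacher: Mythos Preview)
Your core idea---approximate the left-regular action of $\Gamma$ on a F{\o}lner set by partial permutations and use freeness of the left-translation to get full rank in the limit---is exactly the paper's idea. Where you diverge, and where you struggle, is the compatibility problem you flag at the end: how to make the approximations $\rho_n$ cohere with the fixed diagonal embeddings $\ph_n \colon \sl_{2^n}(q) \to \sl_{2^{n+1}}(q)$.

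The paper does not solve this problem; it sidesteps it. Instead of forcing the F{\o}lner sets to have size $2^n$ and padding and correcting determinants, the paper works in the matrix algebras $M_{|F_n|}(\bf_q)$ (sizes not powers of two, partial matrices rather than honest permutations: act by left translation on $L_n^h:=\{g\in F_n\mid hg\in F_n\}$ and put $0$ on the complement). The nested F{\o}lner sets $F_{n+1}=F_nD_n$ with $\{F_nc:c\in D_n\}$ disjoint give a block-diagonal inclusion $M_{|F_n|}(\bf_q)\hookrightarrow M_{|F_{n+1}|}(\bf_q)$, and the sequence $(h_n)_n$ is Cauchy in that inductive system. Then one invokes Halperin's theorem (already mentioned in the introduction): the rank-completion of \emph{any} inductive limit of full matrix algebras over $\bf_q$ is the same ring $M(\bf_q)$, so its unit group is $A(q)$ regardless of the particular sizes and inclusions. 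This replaces all your padding, determinant fixes, and subsequential-limit manoeuvres in one stroke.

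There is a second point you do not address. The assumption that one can choose F{\o}lner sets with $F_{n+1}$ exactly tiled by translates of $F_n$ (your $F_{n+1}\supset F_n\sqcup F_n$, the paper's $F_{n+1}=F_nD_n$) is not available for an arbitrary countable amenable group. The paper treats this special case first, and for the general case appeals to Ornstein--Weiss quasi-tiling, or alternatively observes that the inductive limit of symmetric groups---i.e.\ the full group of the hyperfinite equivalence relation---sits inside $A(q)$ and already contains every countable amenable group as a $1$-discrete subgroup. Your sketch, as written, handles only the special case.
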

\begin{proof}
  Let $\Gamma$ be a countable amenable group and suppose at first that there is a sequence of F\o lner sets $(F_n)$ such that $F_{n+1}=F_nD_n$ where $D_n\subset \Gamma$ is a finite subset, so that the family $\{F_nc \mid c\in D_n\}$ consists of pairwise disjoint subsets. For every $h\in \Gamma$ and $n\in\bn$, we define $L_n^h:=\{g\in F_n \mid hg\subset F_n\}$ and we observe that for every $\eps>0$ and $h\in\Gamma$, there exists $N\in\bn$ such that for every $n\geq N$, we have $|L_n^h|>(1-\eps)|F_n|$. We define elements $h_n\in M_{|F_n|}(\bf_q)$ as the usual action by permutation on the left on $L_n^h$ and $0$ on the complement. It is now easy to observe that $(h_n)_n$ is a Cauchy sequence for every $h\in \Gamma$ and hence we can define a maximal discrete embedding of $\Gamma$ into the group of invertible elements of the completion of the inductive limit of matrices of size $|F_n|$, which as explained in the introduction, by the result of Halperin \cite{Halperin}, is isomorphic to $A(q)$.  

For a general countable amenable group we will use Ornstein-Weiss theory \cite{OW}. In fact the proposition follows from a similar argument using as F\o lner sets which are quasi-tiling \cite[page 24, Theorem 6]{OW}. Or one can also observe that $A(q)$ contains the inductive limit of symmetric groups which is the full group of the hyperfinite equivalence relation, which by \cite{OW}, contains any amenable group as maximal discrete subgroup. 
\end{proof}

The aim of this section is to show the following theorem.

\begin{thm} \label{subgroup}
If $q$ is odd, $A(q)$ contains a non-abelian free group as a $1$-discrete subgroup.
\end{thm}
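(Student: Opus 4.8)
The plan is to realize the free group inside $A(q)$ as a limit of reductions of a fixed free subgroup of $\sl_2$ over a polynomial ring. It suffices to construct sequences $(a_n)_n,(b_n)_n$ with $a_n,b_n\in\sl_{2^n}(q)$ which are Cauchy for the rank-metric and such that for every non-trivial reduced word $w$ in two letters one has $\rank(w(a_n,b_n)-\id)/2^n\to 1$. Indeed, the sequences then converge to elements $a,b\in A(q)$, and continuity of the metric gives $\dist(\id,w(a,b))=\lim_n\dist(\id,w(a_n,b_n))=1$ for every non-trivial $w$; this simultaneously shows that $w\mapsto w(a,b)$ is injective, hence $\langle a,b\rangle$ is free, and that $\langle a,b\rangle$ is $1$-discrete.

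To obtain the sequences, put $R_n:=\bf_q[t]/(t^{2^{n-1}})$, a local $\bf_q$-algebra of dimension $2^{n-1}$. The regular representation $R_n\curvearrowright R_n^{\,2}$ realizes $M_2(R_n)$ as a unital $\bf_q$-subalgebra of $M_{2^n}(\bf_q)$, and since $\det_{R_n}=1$ forces $\det_{\bf_q}=N_{R_n/\bf_q}(1)=1$, it restricts to an embedding $\sl_2(R_n)\hookrightarrow\sl_{2^n}(q)$. Next choose $g_1,g_2\in\sl_2(\bf_q[t])$ generating a free group with the property that every non-trivial reduced word $w$ is \emph{loxodromic}, meaning that $\det(w(g_1,g_2)-\id)=2-\trace(w(g_1,g_2))$ is a \emph{non-zero} polynomial in $\bf_q[t]$; such a Schottky group can be produced by a ping-pong argument for the action of $\sl_2(\bf_q((1/t)))$ on its Bruhat--Tits tree, using two loxodromic matrices with disjoint pairs of fixed ends. (This is the step where the hypothesis ``$q$ odd'' is invoked, to make an explicit such choice and to rule out the degenerate possibility $2-\trace(w)\equiv 0$; we will not dwell on it.) Finally set $a_n:=g_1\bmod t^{2^{n-1}}\in\sl_2(R_n)\hookrightarrow\sl_{2^n}(q)$ and likewise $b_n:=g_2\bmod t^{2^{n-1}}$.

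For the rank estimate, fix a non-trivial reduced word $w$ and let $X_w:=w(g_1,g_2)-\id\in M_2(\bf_q[t])$, a fixed non-zero matrix with $\det X_w=2-\trace(w(g_1,g_2))\neq 0$. Passing to the discrete valuation ring $\bf_q[t]_{(t)}$ and using Smith normal form, $X_w$ is equivalent over $\bf_q[t]_{(t)}$ to $\mathrm{diag}(t^{\alpha},t^{\beta})$ with $\alpha+\beta=v_t(\det X_w)<\infty$, the transition matrices being invertible modulo every power of $t$. Hence, for $n$ so large that $\alpha+\beta<2^{n-1}$, the operator $X_w\bmod t^{2^{n-1}}$ on $R_n^{\,2}$ has $\bf_q$-rank exactly $2\cdot 2^{n-1}-(\alpha+\beta)$, so
\[\dist\!\left(\id,\,w(a_n,b_n)\right)=1-\frac{\alpha+\beta}{2^{n}}\ \longrightarrow\ 1,\]
which is what we need (in particular $w(a_n,b_n)\neq\id$ eventually).

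It remains to verify the Cauchy property, i.e.\ summability of $\dist(\ph_n(a_n),a_{n+1})$ and of the analogous quantities for $b$. The key point is that $R_{n+1}=\bf_q[t]/(t^{2^{n}})$ contains the multiplication-by-$t$-invariant ideal $t^{2^{n-1}}R_{n+1}\cong R_n$ with quotient $R_n$, and that in the monomial $\bf_q$-basis the induced splitting $R_{n+1}\cong R_n\oplus R_n$ turns multiplication by $t$ into the block matrix $\left(\begin{smallmatrix}M&0\\J&M\end{smallmatrix}\right)$, where $M$ is multiplication by $t$ on $R_n$ and the ``junction'' $J$, given by $t^{2^{n-1}-1}\mapsto t^{2^{n-1}}$, has rank $1$. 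Choosing the $\bf_q$-bases of the $R_n^{\,2}$ recursively so that $\ph_n$ becomes the literal block-diagonal doubling, substitution of this block-triangular operator into the fixed matrix $g_1$ yields $a_{n+1}=\left(\begin{smallmatrix}g_1(M)&0\\ {*}&g_1(M)\end{smallmatrix}\right)$ with $\rank_{\bf_q}({*})\le C(g_1)$ for a constant depending only on $g_1$: the off-diagonal block of the $k$-th power of $\left(\begin{smallmatrix}M&0\\J&M\end{smallmatrix}\right)$ is a sum of at most $k$ terms $M^iJM^{k-1-i}$, each of rank $\le\rank(J)=1$, and $g_1$ involves only boundedly many powers of $t$. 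Since $g_1(M)=a_n$, this gives $\dist(\ph_n(a_n),a_{n+1})\le C(g_1)/2^{n+1}$, which is summable, and similarly for $b$. Therefore $a_n\to a$ and $b_n\to b$ in $A(q)$, and by the previous paragraph $\langle a,b\rangle$ is a $1$-discrete free subgroup. The main obstacle in the whole scheme is the second paragraph: one must choose a free subgroup of $\sl_2(\bf_q[t])$ whose reductions modulo $t^{2^{n-1}}$ remain compatible with the diagonal embeddings $\ph_n$ up to bounded rank while retaining the non-degeneracy $2-\trace(w)\not\equiv 0$; the rank-one junction computation is exactly what reconciles these two requirements.
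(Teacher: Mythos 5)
Your proof is correct in its essential structure but takes a genuinely different route from the paper. The paper's argument is soft: it embeds the Heisenberg group $\Gamma$ via F\o lner sets (Proposition \ref{prop:amena}), shows that non-zero-divisors of $\bf_q\Gamma$ become invertible in $M(\bf_q)$ (Lemma \ref{lem:inv}), invokes Linnell and Tamari to realize the Ore localization $Q(\bf_q\Gamma)$ as a skew-field inside $M(\bf_q)$ (so that $1$-discreteness is automatic, every non-zero element of the skew-field having rank $1$), and quotes Gon\c{c}alves--Mandel--Shirvani for free subgroups of $Q(\bf_q\Gamma)$ --- it is this last citation that forces $q$ to be odd. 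You instead take an explicit Schottky subgroup of $\sl_2(\bf_q[t])$, reduce modulo $t^{2^{n-1}}$, use the regular representation of $R_n=\bf_q[t]/(t^{2^{n-1}})$ to land in $\sl_{2^n}(q)$ (the determinant bookkeeping $\det_{\bf_q}=N_{R_n/\bf_q}\circ\det_{R_n}$ is fine), use Smith normal form over the discrete valuation ring $\bf_q[t]_{(t)}$ to get $\dist(\id,w(a_n,b_n))\to 1$, and use the rank-one junction computation for the Cauchy property; I checked these steps and they are sound, including the compatibility of the recursive basis choice with the maps $\ph_n$. Morally you are still exploiting invertibility of $w(g_1,g_2)-\id$ over a localization --- here the commutative field $\bf_q(t)$ rather than a noncommutative skew-field --- with the free group now supplied by $\sl_2$ rather than by a multiplicative group. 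Two remarks. First, the one step you decline to detail (a free subgroup of $\sl_2(\bf_q[t])$ all of whose non-trivial elements are loxodromic) is standard: ping-pong on the Bruhat--Tits tree of $\sl_2(\bf_q((1/t)))$ with high powers of two loxodromic matrices with distinct axis-endpoints, e.g.\ $\left(\begin{smallmatrix}t&-1\\1&0\end{smallmatrix}\right)$ and a conjugate. Second, contrary to your parenthetical, that construction nowhere uses that $q$ is odd: loxodromic means the trace has negative valuation, hence is a non-constant polynomial, so $2-\trace(w)\neq 0$ even in characteristic $2$. So your route, once written out, is more elementary and quantitative and would in fact remove the hypothesis that $q$ is odd; what the paper's route buys is generality, namely that every amenable skew-field embeds $1$-discretely, in line with Elek's work.
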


Theorem \ref{subgroup} follows form Elek's work \cite{elek2} and \cite{elek3}. In fact, Elek showed that any amenable skew-field embeds as a discrete sub-algebra of $M(\bf_q)$ and by \cite{Goncalves}, there exist amenable skew-fields with free subgroups. However we do not need the full strength of Elek's results and we will construct an explicit embedding of a skew-field containing a free group into $M(\bf_q)$. 

Note that the analogous result is not true in the setting of II$_1$-factors, i.e., the free group on two generators is not a $1$-discrete (which in this case should mean that different group elements are orthogonal with respect to the trace) subgroup of the unitary group of the hyperfinite II$_1$-factor. Moreover, the preceding result seems to yield the first example of a discrete non-amenable subgroup of an amenable Polish group, whose topology is given by a bi-invariant metric. It is an open problem, if the free group on two generators (or in fact any discrete non-amenable group) can be a discrete subgroup of the unitary group of the hyperfinite II$_1$-factor or the topological full group of the hyperfinite equivalence relation.

The proof of Theorem \ref{subgroup} is based on the following lemma, which is inspired by Elek's canonical rank function, see \cite{elek}.

\begin{lem}\label{lem:inv}
  Let $\Gamma$ be a finitely generated amenable group and suppose that there is a sequence of F\o lner sets $(F_n)$ such that $F_{n+1}=F_nD_n$ where $D_n\subset \Gamma$ is a finite subset. Consider the embedding $\Phi:\Gamma\rightarrow M(\bf_q)$ given in Proposition \ref{prop:amena}. Then for every element $a$ of the group algebra $\bf_q\Gamma$ which is not a zero-divisor, the element $\Phi(a)\in M(\bf_q)$ is invertible.
\end{lem}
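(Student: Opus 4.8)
The plan is to exploit the explicit description of the embedding $\Phi$ coming from Proposition~\ref{prop:amena}: for each $n$, multiplication by an element $a = \sum_{\gamma} a_\gamma \gamma \in \bf_q\Gamma$ acts on $\bf_q^{F_n} = \bf_q[F_n]$ ``approximately'' as the restriction of left-multiplication by $a$ on $\bf_q[\Gamma]$, with an error supported on a set of size $o(|F_n|)$ (the boundary of $F_n$ relative to the finite support of $a$). Concretely, if $S$ is the support of $a$, set $L_n := \{g \in F_n \mid Sg \subset F_n\}$; then $|F_n \setminus L_n| \leq \eps_n |F_n|$ with $\eps_n \to 0$, and the matrix $a_n \in M_{|F_n|}(\bf_q)$ representing $\Phi(a)$ at level $n$ agrees with genuine left-multiplication by $a$ on the coordinates indexed by $L_n$. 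I would first make this comparison precise and record that $\Phi(a)$ is the rank-limit of the $a_n$.

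Next I would pass to invertibility. Since $a$ is not a zero-divisor in $\bf_q\Gamma$, for each fixed $n$ I want to bound $\rank(a_n)$ from below, i.e. bound $\dim_{\bf_q}\ker(a_n)$ from above by $o(|F_n|)$. The key step: if $v \in \bf_q^{F_n}$ is a nonzero vector with $a_n v = 0$, then, viewing $v$ as $\sum_{g \in F_n} v_g \cdot g \in \bf_q[\Gamma]$, the product $a v \in \bf_q[\Gamma]$ is supported on $\Gamma \setminus L_n'$ for a suitable slightly-enlarged boundary set $L_n'$ (again of size $o(|F_n|)$), because on the ``interior'' $a_n$ and left-multiplication by $a$ coincide. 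So $\ker(a_n)$ is carried, under the inclusion $\bf_q^{F_n} \hookrightarrow \bf_q[\Gamma]$ and left-multiplication by $a$, into the finite-dimensional space $\bigoplus_{x \in \partial_n} \bf_q x$ where $\partial_n$ is that boundary set. Since $a$ is injective as a left-multiplication operator on $\bf_q[\Gamma]$ (this is exactly the non-zero-divisor hypothesis — a right zero divisor would give an element annihilated on the left), the composite map $\ker(a_n) \to \bf_q[\Gamma] \xrightarrow{a\cdot} \bf_q[\Gamma]$ is injective, hence $\dim \ker(a_n) \leq |\partial_n| = o(|F_n|)$. Therefore $\frac1{|F_n|}\rank(a_n) \to 1$, which says exactly that $\dist(\Phi(a), \text{invertible elements}) $ — more precisely that the canonical rank of $\Phi(a)$ in $M(\bf_q)$ equals $1$.

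Finally I would conclude that a full-rank element of the von Neumann regular ring $M(\bf_q)$ is invertible. This is a standard fact about the rank-completion: in $M(\bf_q)$ the rank is a faithful normalized rank function, an element of rank $1$ has trivial kernel and cokernel, and (as already noted in the introduction for the passage between $M(\bf_q)$ and $A(q)$) such an element is a limit of invertible elements and, being itself of full rank, is invertible — equivalently, one produces the inverse as the rank-limit of $a_n^{-1}$ corrected by vanishing-rank perturbations, using that $M(\bf_q)$ is complete. I expect the main obstacle to be the careful bookkeeping in the second paragraph: controlling exactly which boundary set $\ker(a_n)$ lands in after applying $a$, and making sure the non-zero-divisor hypothesis is invoked on the correct side (left- versus right-multiplication) so that injectivity of the relevant operator genuinely holds; once that is pinned down, the rank estimate and the Fölner asymptotics are routine, and the final step is a known property of $M(\bf_q)$.
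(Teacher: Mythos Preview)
Your proposal is correct and follows essentially the same approach as the paper: define the interior $L_n=\{g\in F_n\mid Sg\subset F_n\}$, use that $a_n$ agrees with left-multiplication by $a$ on $L_n$, invoke the non-zero-divisor hypothesis to get injectivity of $x\mapsto ax$ on $\bf_q\Gamma$, deduce that the normalized rank of $a_n$ tends to $1$, and conclude that $\Phi(a)$ has full rank and is therefore invertible in the von Neumann regular ring $M(\bf_q)$. The only cosmetic difference is that the paper bounds $\rank(a_n)$ from below by directly observing that the columns $\{a_n g\}_{g\in L_n}$ are linearly independent (since $a_n g=ag$ for $g\in L_n$ and a dependence would exhibit a right annihilator of $a$), whereas you bound $\dim\ker(a_n)$ from above by mapping the kernel injectively into the boundary; these are dual and equally valid, the paper's formulation being marginally shorter because it avoids the bookkeeping on where $av-a_nv$ is supported.
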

\begin{proof}
  Let us fix a non zero-divisor $a\in \bf_q\Gamma$. Let us denote by $S\subset \Gamma$ the support of $a$. For every $n$, as in Proposition \ref{prop:amena},  we define $L_n^a:=\{g\in F_n \mid Sg\subset F_n\}$ and we observe that for every $\eps$, there exists $N\in\bn$ such that for every $n\geq N$, we have $|L_n^a|>(1-\eps)|F_n|$. Observe also that $a$ is the limit of the elements $a_n$ which acts on $L^a_n$ as left translation. We claim that the vectors $\{a_n g\}_{g\in L^a_n}$ are linearly independent. In fact if $g_1,\ldots,g_k\in L_n^a$ and $\alpha_1,\ldots,\alpha_k\in\bf_q$ are such that $\sum_i a(\alpha_i g_i)=0$ then $a(\sum_i \alpha_i g_i)=0$ and whence $a$ is a zero-divisor in $\bf_q\Gamma$. So the rank of $a_n$ is at least $1-\eps$ and hence the rank of $a$ is $1$ and therefore it is invertible. 
\end{proof}

\begin{proof}[Proof of Theorem \ref{subgroup}]
  Let $\Gamma$ be an elementary amenable, torsion free group which satisfies the hypothesis of Lemma \ref{lem:inv}. By \cite[Theorem \textbf{2.3}]{linnell}, every non-zero element of the group algebra $\bf_q\Gamma$ is a not a zero-divisor. By Tamari \cite[Example \textbf{8.16}]{tamari}, we know that group rings of amenable groups which do not contain zero-divisors satisfy the Ore condition, for more about the Ore localization see \cite{stenstrom}. Hence the Ore completion of $\bf_q\Gamma$, denoted by $Q(\bf_q\Gamma)$ is a skew-field. Consider the embedding $\Phi:\Gamma\to M(\bf_q)$ defined in Proposition \ref{prop:amena} and extend it to $\bf_q\Gamma$. Observe that by the universality of the Ore completion and by Lemma \ref{lem:inv}, $\Phi$ extends to a map $\Phi:Q(\bf_q\Gamma)\rightarrow M(\bf_q)$. When $\Gamma$ is also nilpotent, for example when $\Gamma$ is the Heisenberg group, $Q(\bf_q\Gamma)$ has free subgroups by Theorem 5 of \cite{Goncalves}. This concludes the proof.  
\end{proof}

It is plausible that for every maximal discrete embedding of a non amenable group into $M(\bf_q)$ the conclusion of Lemma \ref{lem:inv} cannot hold. 

\section*{Acknowledgements }

This research was supported by ERC Starting Grant No.\ 277728 and ERC Consolidator Grant No.\ 681207. Part of this work was written during the trimester on {\it Random Walks and Asymptotic Geometry of Groups} in 2014 at Institute Henri Poincar\'{e} in Paris -- we are grateful to this institution for its hospitality. We thank G\'abor Elek for interesting remarks that led to the results in Section \ref{disc}.  We also thank the unknown referee for remarks that improved the exposition.

\end{document}